\documentclass[12pt]{amsart}
\usepackage[english]{babel}
\usepackage[latin1]{inputenc}
\usepackage{amssymb,amsfonts,amsmath,amsthm}
\usepackage{graphicx}
\usepackage[all]{xy}
\usepackage{enumerate}
\newcommand{\SortNoop}[1]{}

\newcommand{\pt}{\forall}

\newcommand{\plonge}{\hookrightarrow}

\newcommand{\projects}{\twoheadrightarrow}

\newcommand{\laction}{\curvearrowright}
\newcommand{\ol}[1]{\overline{#1}}

\newcommand{\mc}[1]{\mathcal{#1}}
\newcommand{\lie}[1]{\mathfrak{#1}}

\newcommand{\Ad}{\mathrm{Ad}}

\newcommand{\moduli}[1]{\mathrm{Higgs}(#1)}

\newcommand{\Ker}{\textrm{Ker}}
\newcommand{\Isom}{\textrm{Isom}}

\newcommand{\benum}{\begin{enumerate}}
\newcommand{\eenum}{\end{enumerate}}

\newcommand{\R}{\mathbb{R}}

\newcommand{\C}{\mathbb{C}}
\newcommand{\Z}{\mathbb{Z}}

\renewcommand{\sl}{\mathfrak{sl}}

\newcommand{\su}{\mathfrak{su}}

\newcommand{\so}{\mathfrak{so}}

\newcommand{\Hom}{\mathrm{Hom}}
\newcommand{\End}{\mathrm{End}}
\newcommand{\Pic}{\mathrm{Pic}}
\newcommand{\Jac}{\mathrm{Jac}}

\newcommand{\SL}{\mathrm{SL}}

\newcommand{\SU}{\mathrm{SU}}
\newcommand{\U}{\mathrm{U}}
\newcommand{\GL}{\mathrm{GL}}

\newcommand{\g}{\mathfrak{g}}
\newcommand{\h}{\mathfrak{h}}

\newcommand{\la}{\mathfrak{a}}
\newcommand{\lr}{\mathfrak{r}}

\newcommand{\m}{\mathfrak{m}}

\newcommand{\lt}{\mathfrak{t}}

\newcommand{\lc}{\mathfrak{c}}
\newcommand{\ld}{\mathfrak{d}}

\newcommand{\gr}{\g_{reg}}
\newcommand{\hgr}{\widehat{\g}_{reg}}

\newcommand{\mr}{{\m}_{reg}}

\newcommand{\CH}{C_H(\la)}
\newcommand{\NH}{N_H(\la)}

\newcommand{\Higgs}{\mathrm{Higgs}}

\newcommand{\spec}{\widetilde{X}}

\newcommand{\specc}{\widehat{X}}


\newcommand{\GN}{\ol{G^\C/N}}
\newcommand{\GD}{\ol{G^\C/D^\C}}
\newcommand{\hbase}{\mc{A}}

\newtheorem{thm}{Theorem}[section]
\newtheorem{prop}[thm]{Proposition}
\newtheorem{cor}[thm]{Corollary}
\newtheorem{rk}[thm]{Remark}
\newtheorem{lm}[thm]{Lemma}
\theoremstyle{definition}

\newtheorem{defi}[thm]{Definition}
\newtheorem{ex}[thm]{Example}

\begin{document}
\title[Cameral data for $\SU(p+1,p)$-Higgs bundles]
{Cameral data for $\SU(p+1,p)$-Higgs bundles}
\author{Ana Pe{\'on}-Nieto}
\address{Mathematisches Institut, Ruprecht-Karls Universit{\"a}t\\
Im Neuenheimer Feld 288\\
69120 Heidelberg}
\email{apeonnieto@mathi.uni-heidelberg.de}
\keywords{Higgs bundles, Hitchin map, cameral/spectral cover, cameral/spectral data}
\thanks{This work was partially funded by the Mathematics Center Heidelberg}
\begin{abstract}
 We study the cameral and spectral data for the moduli space of polystable $\SU(p+1,p)$-Higgs bundles and deduce the latter from
 the former. 
 As an application, we obtain that the Toledo invariant classifies the connected components 
 of the regular fibers of the Hitchin map. 
 \end{abstract}

\maketitle


\section{Introduction}
Higgs bundle theory has experienced an enormous
development since its origins in \cite{SDE}, due to the rich geometry of these objects. 

An instance of this is the so called non-abelian Hodge correspondence, which given 
a reductive Lie group $G$, establishes a homeomorphism between the 
moduli space of $G$-Higgs bundles on a Riemann surface $X$, and the moduli space of representations
$\rho:\pi_1(X)\to G$ \cite{SDE, Donaldson, Corlette, SimpsonHodge, GGMHitchinKobayashi}. A $G$-Higgs bundle is thus
naturally seen to be a pair $(E,\phi)$, where $E\to X$ is a 
holomorphic principal $H^\C$-bundle, for $H\leq G$ maximally compact, and
$\phi\in H^0(X, E(\m^\C)\otimes K)$ is the Higgs field. In the former, $K\to X$ denotes the canonical bundle,
$\m$ is the non compact part of the Cartan decomposition
 $ \g=\h\oplus\m$ with complexification $\m^\C$ and $E(\m^\C)$ is the associated bundle via the isotropy representation
 (\ref{eq isotropy}).
 
 The moduli space of polystable $G$-Higgs bundles $\moduli{G}$ is equipped with extra structure determined by the 
\textbf{Hitchin map}
\begin{equation}\label{eq Hitchin intro}
h_G:\moduli{G}\to B_G=\oplus_{i=1}^{rk_\R G}H^0(X,K^{d_i})
\end{equation}
which sends a pair $(E,\phi)$ to the characteristic coefficients of $\phi$; the numbers $d_i-1$ are the 
exponents of $G$, and the space $B_G$ is called the \textbf{Hitchin base}. 

The study of the Hitchin map is essential for understanding the geometry of $\moduli{G}$. In the case of 
complex groups it defines an algebraically completely integrable system \cite{Duke, D95}, 
whose description has been applied to many interesting problems
\cite{Duke, BNR, KouPantevAut, HT, DPLanglands}. 

For real groups, (\ref{eq Hitchin intro}) is not yet well understood. 
The tool to undertake this problem in full generality is Donagi's cameral construction \cite{D93,D95}, 
and Donagi-Gaitsgory's \cite{DG}, which was adapted to real groups 
in the author's thesis \cite{Tesis} (see also \cite{HKR, Abelian}). 
 When $G$ is a matrix group, Hitchin's spectral techniques \cite{Duke} can also be used to understand the fibers 
of the Hitchin map. This is the approach found in \cite{Emily,FGN,HitSchap,SchapUpp,LauThesis},
all of which deal with some classical groups. 
 Just as in the case of complex groups, the Hitchin map for real groups has been applied to a range of 
 problems, such as the description of a generalization of Teichm\"uller space \cite{Teich} and a test of
Kapustin-Witten's approach to mirror symmetry  \cite{HitLanglands}.

In the present paper, we study the case $G=\SU(p+1,p)$ from both the cameral and the spectral
construction, and illustrate how to deduce the latter from the former. 
The choice of $\SU(p+1,p)$ is not arbitrary, but is due to the particular characteristics
 of the groups $\SU(p,q)$ as $q$ varies. Indeed, both $\SU(p,p)$ and $\SU(p+1,p)$ are quasi-split real forms, 
(which implies in particular that the fibers of the Hitchin map (\ref{eq Hitchin intro}) 
are generically subvarieties of line bundles only in these two cases \cite{Tesis}), but 
quite different from one another 
($\SU(p,p)$ being of Hermitian tube type and $\SU(p+1,p)$ of Hermitian non-tube type). With respect
to the Hitchin map, this causes for the fibers of the Hitchin map to be generically regular and stable in the
first case and never completely so in the second. 

The case $p=q$ is studied  in \cite{SchapUpp} from the spectral data point of view. The cameral and spectral
 approaches being equivalent for classical groups \cite{D93}, we explain in here how to recover 
 the spectral picture from the cameral one for $\SU(p+1,p)$, the case of $\SU(p,p)$ following in a similar
 way. 

We describe the generic fibers of the Hitchin map (\ref{eq Hitchin intro}) in two ways. Firstly, in terms of principal
$(\C^\times)^{2p}$ bundles $P$ (called cameral data) over the cameral 
cover $\specc\to X$. This is a ramified $S_{2p+1}$-Galois cover of $X$ parametrizing ordered
eigenvalues of Higgs fields. Hence, to each 
Higgs bundle $(E,\phi)$ one can associate a cameral cover, which only depends on the characteristic
polynomial of $\phi$, or equivalently, on the image of $(E,\phi)$ via (\ref{eq Hitchin intro}). Using some
equivariance properties  of cameral data, one can prove it to be determined by $(\C^\times)^p$-principal
bundles over a $\Z_2$-quotient of $\specc$, where the action of $\Z_2$ is determined by the involution defining
$\SU(p+1,p)$ inside of $\SL(2p+1,\C)$. See Theorem \ref{prop gric cam data} for details. 

Secondly, we show in Theorem \ref{thm spectral data}
that the fibers are isomorphic to varieties of line bundles over the spectral cover
$\spec\to X$, which parametrizes (unordered) eigenvalues, and is thus a quotient of the cameral cover 
\cite{D93,DG}. In these terms, the points of the fiber are line bundles over a quotient
$\spec/\Z_2$, plus some extra data. 

The equivalence of both the cameral and spectral approach is proved in Proposition \ref{prop descent}.

The paper is structured as follows: Sections \ref{section cameral data} and \ref{section higgs bundles} establish the basic 
notions and results from \cite{Tesis, Abelian}, which we use to obtain the description of 
the generic regular fibers in Section \ref{section cam data} in terms of cameral data. We compute the spectral data in 
Section \ref{section spectral}, from which we see in Corollary \ref{cor con comps} that there is only one connected component
of the fibers per invariant. Next, we explain in Section \ref{section from cam to spec} how the spectral picture can be deduced
from the cameral one. Section \ref{section non reg} contains a brief discussion on how to complete the results
to non regular bundles; we observe the existence of a bundle of toric varieties surjecting onto a full dimensional subset of the Hitchin fiber. 
A geometric discussion of the algebraic notion of regularity is included in 
Section \ref{section regularity}.

We were notified that  Baraglia and Schaposnik have obtained related results.

\subsection*{Acknowledgements} The author wishes to thank \'Oscar Garc\'ia-Prada, Christian Pauly and Anna Wienhard for useful comments on a first draft of this paper.
\section{Quasi-split real forms and cameral data}\label{section cameral data}
We briefly explain in this section the main theorem in \cite{Tesis, Abelian} that we will apply to our particular case $\SU(p+1,p)$. 
\begin{defi}
 A quasi-split real form $G<G^\C$ is a real form containing a subgroup $B<G$ whose complexification $B^\C<G^\C$ is a Borel
subgroup. 
\end{defi}
\begin{rk}\label{rk qs}
An alternative definition is the following: a real form is quasi-split if and only if regular elements (cf. Definition 
\ref{defi regular}) have abelian centralisers. The consequence of this is that automorphism preserving the characteristic 
polynomial have abelian connected component.

Quasi-split real forms include split real forms, and in the simple group case, groups whose Lie algebras are $\su(p,p)$, $\su(p+1,p)$, $\so(p,p+2)$ or $\lie{e}_{6(2)}$. 
\end{rk}

We will assume that the involution $\sigma$ defining $G$ inside $G^\C$ commutes with a compact involution $\tau$ in $G^\C$. 
In that case, we can always take $\tau$ such that $G^\tau=H$. In particular, the Cartan involution $\theta$, extended by complex linearity
to $\g^\C$, lifts to a holomorphic involution on $G^\C$ given by $\theta:=\sigma\tau$. As a consequence $H^\C=(G^\C)^\theta$.
This always holds in the connected group 
case (see \cite{K}, Proposition 7.21, or Proposition 3.20 \cite{HKR} for milder conditions on the group). Let $\ld^\C\subset\g^\C$ be a $\theta$ and $\sigma$ 
invariant Cartan subalgebra. We may choose 
it in such a way that $\la^\C=\ld^\C\cap\m^\C$ is maximal. Fix $D^\C=A^\C T^\C$ the corresponding maximal torus, and let
$D^\C<B^\C$ be a Borel subgroup obtained from $B<G$. We let $S\subset\Delta(\g^\C,\ld^\C)$ be the corresponding sets of simple roots
and roots. The roots can be proven to belong to $\la^*\oplus i\lt^*$ (suitably extended by complex
linearity), and so we may choose $S$ in such a way that  
\begin{equation}
\label{condition order}
\la^*> i\lt^*. 
\end{equation}
Quasi-splitness is equivalent to $\Delta\cap i\lt^*=\{0\}$
\begin{defi}
Let $X$ be a connected smooth complex projective curve, and $K$ its canonical bundle.
 A $G$-Higgs bundle on $X$ is a pair  $(E,\phi)$ where $E\to X$ is a holomorphic principal $H^\C$-bundle, and 
 $\phi\in H^0(X, E(\m^\C)\otimes K)$, where $E(\m^\C)$ is the associated bundle via the isotropy representation 
 $\iota: H^\C\to\GL(\m^\C)$.
\end{defi}
We have two meaningful Hitchin maps. To construct them, consider the Chevalley morphisms 
\begin{equation}\label{eq chevalley real}
\chi_G:\m^\C\to\m^\C//H^\C\cong \la^\C/W( \la) 
\end{equation}
(where $W( \la)$ is the restricted Weyl group (\ref{eq res weyl}) and the second isomorphism follows from 
Theorem \ref{thm Chevalley}) and
\begin{equation}\label{eq chevalley complex}
\chi_{G^\C}:\g^\C\to\g^\C//G^\C\cong \ld^\C/W,
\end{equation}
where the second isomorphism is again Theorem \ref{thm Chevalley} applied to the real form $(\g^\C)_\R\subset\g^\C\otimes_\R\C$.
These induce 
\begin{equation}\label{eq rhitchin gral}
 h_G:\Higgs(G)\to B_G=H^0(\la^\C\otimes K/W(\la)), \qquad (E,\phi)\mapsto\chi_G(\phi)
\end{equation}
and
\begin{equation}\label{eq chitchin gral}
h_{G^\C}:\Higgs(G^\C)\to B_{G^\C}=H^0(\ld^\C\otimes K/W),\qquad (E,\phi)\mapsto\chi_{G^\C}(\phi).
\end{equation}
Note that evaluation of (\ref{eq chevalley real}) and (\ref{eq chevalley complex}) on the Higgs field is well defined by 
$H^\C\times\C^\times$-equivariance of $\chi_G$ (respectively, $G^\C\times\C^\times$-equivariance of $\chi_{G^\C}$).

Let $\kappa:\Higgs(G)\to\Higgs(G^\C)$ be given by $\kappa(E,\phi)=(E(G^\C),di(\phi))$, where $i:G\plonge G^\C$ 
is the inclusion. This map induces a commutative diagram
\begin{equation}\label{eq cartesian higgs}
\xymatrix{
\Higgs(G)\ar[r]^-\kappa\ar[d]&\Higgs(G^\C)\ar[d]\\
B_G\ar[r]&B_{G^\C}.
}
\end{equation}
In particular, fibers of $h_G$ are taken to fibers of $h_{G^\C}$.

\begin{defi}\label{defi cam covers}
Given $b\in B_{G^\C}$, we define the associated cameral cover $\specc_b$ to be
the fibered product fitting in the Cartesian diagram 
\begin{equation}\label{eq cartesian cameral}
 \xymatrix{
\specc_b\ar[r]\ar[d]& \ld^\C\otimes K\ar[d]\\
X\ar[r]_-b& \ld^\C\otimes K/W.}
\end{equation}
\end{defi}
Now, assume $b:X\to\ld^\C\otimes K/W$ splits through $\la^\C\otimes K/W(\la)$; that is, $b\in B_G$. 
Then we have a subcover
\begin{equation}\label{eq real cam}
\xymatrix{
\specc_G\ar[r]\ar[d]& \la^\C\otimes K\ar[d]\\
X\ar[r]&  \lr_K,}
\end{equation}
where $\lr_K$ is the image of $\la^\C\otimes K/W(\la)$ in $\ld^\C\otimes K/W$.
A $G$-Higgs bundle $(E,\phi)$ is said to be \textbf{regular} if for all $x\in X$ we have that $\phi(x)\in\mr$  is a regular 
element of $\m^\C$ (see Definition \ref{defi regular}). 

The following theorem follows from Theorem 4.14 in \cite{Tesis} (see also \cite{Abelian}):
\begin{thm}\label{thm cameral global}
There is a one to one correspondence between isomorphism classes of 

1. Regular $G^\C$-Higgs bundles $(E,\phi)$ (not necessarily polystable) such that $h_{G^\C}(E,\phi)=b$.

2. Principal $D^\C$ bundles $P\to\specc_b$ satisfying 
\begin{itemize}
 \item[CD1] For all $\alpha\in S$, 
 $\gamma_{\alpha}:s_\alpha^*P^{s_\alpha}\otimes R_\alpha\cong P$,
 where $s_\alpha\in W$ is the reflection with respect to $\alpha$, whose action on the principal bundle is defined by
 $P^s=P\times_sD^\C$.  As for $R_\alpha$, it is the principal $D^\C$ bundle obtained by pulling back the divisor 
 $D_\alpha=\{s_\alpha=0\}\subset \ld^\C\otimes K$ to $X$ (thus getting $D_\alpha^X$, see Section 5. in \cite{DG}) and taking 
 $R_\alpha=\check{\alpha}(\mc{O}(D_\alpha^X))$. 
 \item[CD2] $(-\theta)^*P^{w_0}|_{\specc_G}\otimes R_{w_0}\cong P$. Here $w_0\in W$ is the element operating on $\ld^\C$ as $\theta$, 
 and $-\theta$ is the involution on $\specc_b$ induced by the action $-\theta\laction\ld^\C$. The ramification
 $R_{w}=\otimes_{\alpha\in \Delta^+\cap w^{-1}\Delta^-} R_{\alpha}$. By definition
 $R_{w_0}=\otimes_{\alpha\in\Delta_{r}}E_\alpha$, where $\Delta_r$ denotes the real roots, namely, roots 
 $\lambda\in\Delta$ which are obtained from elements in $\la^*$ by extension by complex linearity.
 \end{itemize}
All isomorphisms should be elements of $N$, the normaliser of $D^\C$ in $G^\C$.
\end{thm}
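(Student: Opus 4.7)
The plan is to deduce the theorem from Donagi--Gaitsgory's cameral correspondence for complex groups \cite{DG}, by adding the real form constraint as a further twisted equivariance. The first step is to recall that for regular $G^\C$-Higgs bundles with Hitchin image $b \in B_{G^\C}$, the pulled-back Higgs field on $\specc_b$ admits a canonical ordering of its eigenvalues, which refines the structure group from $G^\C$ to $D^\C$ and yields a principal $D^\C$-bundle $P \to \specc_b$. The $W$-action on the cameral cover, twisted by the ramification divisors $R_\alpha$ along the root walls, descends $P$ back to the original $G^\C$-Higgs bundle; this descent data is exactly CD1, and the isomorphisms live in $N$ because the Weyl action is realized by normalizer elements.

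The second step is to impose the real form. A $G^\C$-Higgs bundle comes from a $G$-Higgs bundle precisely when $\theta(\phi)=-\phi$, i.e.\ when $\phi$ is fixed by $-\theta$. Since $G$ is quasi-split, Remark \ref{rk qs} guarantees that $\theta$ acts on $\ld^\C$ through a Weyl element $w_0 \in W$, so $-\theta$ lifts to an involution of $\ld^\C \otimes K$ which, because $b \in B_G$ factors through $\la^\C \otimes K / W(\la)$, restricts to an involution of $\specc_G \subset \specc_b$. Pulling back the equality $\theta(\phi)=-\phi$ to the cameral cover and tracing it through the reconstruction of $P$ from $\phi$ yields the desired isomorphism $(-\theta)^* P^{w_0}|_{\specc_G} \otimes R_{w_0} \cong P$ in $N$, namely CD2. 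The twist by $R_{w_0}$ appears precisely because $-\theta$ ramifies the cameral cover, in the same manner that $R_\alpha$ accounts for the ramification of $s_\alpha$ in CD1.

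The main technical obstacle is identifying $R_{w_0}$ with $\bigotimes_{\alpha \in \Delta_r} E_\alpha$ rather than with the naive $\bigotimes_{\alpha \in \Delta^+ \cap w_0^{-1}\Delta^-} R_\alpha$. The reason, which needs careful verification in local coordinates near ramification, is that on $\ld^\C = \la^\C \oplus \lt^\C$ the involution $\theta$ acts as $-1$ on $\la^*$ and as $+1$ on $i\lt^*$. A root $\alpha = \alpha_\R + i\alpha_\lt$ is sent by $w_0$ to $-\alpha_\R + i\alpha_\lt$: real roots ($\alpha_\lt = 0$) are negated and genuinely ramify $-\theta$, imaginary roots vanish by quasi-splitness ($\Delta \cap i\lt^* = \{0\}$, Remark \ref{rk qs}), and complex roots occur in orbits whose contributions to the ramification cancel. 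Pinning down this cancellation at the level of the local model of $\specc_b \to X$ around a ramification point is where the real work lies.

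Finally, for the converse, given $P$ satisfying CD1 one applies the complex Donagi--Gaitsgory theorem to obtain a regular $G^\C$-Higgs bundle $(E^\C, \phi^\C)$. Condition CD2 then forces $\phi^\C$ to be $-\theta$-invariant on $\specc_G$, hence to take values in the $-\theta$-fixed locus $\m^\C \cap \ld^\C = \la^\C$ of the Cartan; by $W(\la)$-equivariance this extends to $\phi^\C \in E^\C(\m^\C) \otimes K$. The reduction of the structure group from $G^\C$ to $H^\C$ follows from the fact that, under the cameral correspondence, the Higgs field determines its centralizer, which in the regular $\m^\C$ case lives in $H^\C$. No polystability is required at this stage, matching the scope of the statement.
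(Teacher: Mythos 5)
Your overall plan --- invoke Donagi--Gaitsgory for $G^\C$ to get CD1, then encode the real structure as an extra $w_0$-twisted equivariance CD2 --- matches the spirit of the paper's argument, and your heuristic for why only real roots contribute to $R_{w_0}$ (imaginary roots excluded by quasi-splitness, complex roots pairing off) is the right one, though you explicitly leave that computation, which is part of what the theorem asserts, undone. The more serious problem is in your converse direction. You justify the reduction of structure group from $G^\C$ to $H^\C$ by claiming that the centralizer of a regular element of $\m^\C$ ``lives in $H^\C$''; this is false. For regular semisimple $\phi_x\in\la^\C\subset\m^\C$ the centralizer in $G^\C$ is the full Cartan $D^\C=A^\C T^\C$, and $A^\C$ is not contained in $H^\C$ since its Lie algebra $\la^\C$ sits in $\m^\C$ (it is the centralizer in $H^\C$ that quasi-splitness makes abelian, not a subgroup of $H^\C$). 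There is also a circularity: the subbundle $E(\m^\C)\subset E(\g^\C)$ is only defined once $E$ has been reduced to $H^\C$, because $\m^\C$ is $H^\C$-invariant but not $G^\C$-invariant, so you cannot first place $\phi$ in $E(\m^\C)\otimes K$ and then deduce the reduction from that. Hence the key step --- that CD1 together with CD2 actually produce a $G$-Higgs bundle rather than a $G^\C$-Higgs bundle with extra symmetry --- is not established.

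This is precisely the point the paper's proof is structured to avoid. Instead of building an explicit inverse, it observes that over $\la^\C\otimes K/W(\la)$ the stack of regular $G$-Higgs bundles is a gerbe with abelian band $\mc{C}$ (this is where quasi-splitness enters, via Remark \ref{rk qs}), that the stack of cameral data satisfying CD1--CD2 is a torsor over the same band $B\mc{C}$, and that the Grothendieck--Springer construction furnishes a morphism from the former to the latter; a morphism of torsors over a fixed band is automatically an isomorphism, so bijectivity comes for free and no direct reconstruction of the $H^\C$-structure is needed. To repair your argument you would either have to adopt this torsor mechanism, or replace the centralizer claim by an actual descent argument extracting the $H^\C$-reduction from the $\theta$-equivariance of $P$.
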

In particular, this theorem allows to study $\moduli{G}$ by means of much simpler moduli spaces of principal $D^\C$-bundles.
\begin{rk}\label{rk cocyclic rels}
 The $R_w$'s define a cocycle $W\to \specc\times BD^\C$, that is, an assignation of a principal $D^\C$-bundle to each $w\in W$ satisfying
 $$
 R_{ww'}\cong w'\cdot R_{w}\otimes R_{w'}
 $$
 canonically. This allows to extend the equivariance properties specified in CD1 of Theorem \ref{thm cameral global} to all elements of the Weyl group and not just reflections
 associated to simple roots.
 \end{rk}
\begin{defi}
A principal $D^\C$-bundle as specified in point \textit{2.} of Theorem \ref{thm cameral global} is called a \textbf{cameral datum}. 
\end{defi}
For the convenience of the reader, we include a discussion of the main elements in the proof of  Theorem \ref{thm cameral global}, which  is based on the study of the gerbe the Hitchin map defines on the level
of the moduli stack of regular Higgs bundles $\mathbf{Higgs}(G)\to\la^\C\otimes K/W(\la)$. When the real 
form is quasi split, this gerbe has abelian band $\mathcal{C}$, that is, locally 
$\mathbf{Higgs}(G)(X)\cong \la^\C\otimes K/W(\la)\times B\mathcal{C}$, a $B\mc{C}$-torsor over $\la^\C\otimes K/W(\la)$. In particular, pullback allows to identify the fibers 
to varieties of 
coherent sheaves of groups on $X$. In order to give the cocyclic description of Proposition 
\ref{prop gric cam data}, one checks that the stack of cameral data is also a $B\mc{C}$-torsor admitting a morphism 
from $\mathbf{Higgs}(G)$, so they are isomorphic. The latter can be done using  \cite{DG} and identifying the right conditions in order for 
cameral data to be induced from a $G$-Higgs bundle. 

To construct a cameral datum from a $G^\C$-Higgs bundle $(E,\phi)$, we reinterpret $\phi$ as a $G^\C\times \C^\times$ equivariant map
$$
\phi:E\times K\to \gr,
$$
where $\gr$ denotes the subset of regular elements of $\g$ and the action of $(g,z)\in G^\C\times\C^\times$ on $x\in\gr$ is given by
$(g,z)\cdot x=z\Ad(g)x$. Now, the Grothendieck-Springer resolution of $\gr$ can be obtained by pullback of two $W$-covers:
\begin{equation}\label{eq GS resolution}
 \xymatrix{
 \GD\ar[d]&\hgr\ar[r]\ar[l]\ar[d]&{\ld^\C}\ar[d]\\
 \GN&\gr\ar[l]\ar[r]_{\chi_{G^\C}}&{\ld^\C}/W.
 }
\end{equation}
Here, $\GN$ is the variety of regular centralisers (a partial compactification of $G^\C/N$ inside of $Gr(r,\g^\C)$, see \cite{DG}), and $\GD$ is the incidence variety of 
$\GN\times G^\C/B^\C$, with $B^\C$ a Borel subgroup containing $D^\C$. 

Note that $b:=\chi_{G^\C}\circ\phi$ is a point of the Hitchin base $B_{G^\C}$. Let $\widehat{E\times K}:=b^*\hgr$. This is easily seen to descend to a cameral cover $\specc\to X$, so that we have a cartesian 
diagram:
$$
\xymatrix{
\widehat{E\times K}\ar[d]\ar[r]&\specc\ar[d]\\
{E\times K}\ar[r]&X.
}$$
As for the cameral datum, $\GD$ is equipped with a universal principal $D^\C$-bundle
$\mc{D}\to\GD$, the pullback of $G^\C/U^\C\to G^\C/B^\C$ via the map $\GD\plonge\GN\times G^\C/B^\C\projects G^\C/B^\C$. The same descent arguments yield a principal bundle
on  $X$, which can be checked to satisfy invariance conditions with respect to the action of the Weyl group. See \cite{DG,NgoLemme} for details on this. 

The last step is to establish the extra conditions for cameral data coming from $b\in H^0(X,\la^\C\otimes K/W(\la))$, which is given by condition CD2 in Theorem \ref{thm cameral global}.
\section{$\SU(p+1,p)$-Higgs bundles and the Hitchin map}\label{section higgs bundles}
For the Lie theory required in this section we refer to Appendix \ref{section appendix}.
\begin{defi}
An $\SU(p+1,p)$-Higgs bundle is a pair $(E=V\oplus W,\phi)$ consisting of a rank $p+1$ vector bundle 
$V$ and a rank $p$ vector bundle $W$ such that $\det (V\oplus W)=\mc{O}_X$, and a Higgs 
field $\phi\in H^0(X,\End(E)\otimes K)$ with
\begin{equation}\label{eq HF}
\phi=\left(
\begin{array}{ccc}
 0&\beta\\
 \gamma&0
 \end{array}
\right), 
\end{equation}
where $\beta \in H^0(X,W^*\otimes V\otimes K)$ and $\gamma\in H^0(X,V^*\otimes W\otimes K)$.
\end{defi}
\begin{defi}
 An $\mbox{SU}(p+1,p)$-Higgs bundle is semistable if for every pair of subbundles $V'\subseteq V$, 
 $W'\subseteq$ 
 such that 
$V'\oplus W'\subsetneq V\oplus W$ and
 $\phi: V'\oplus W'\to (V'\oplus W')\otimes K$, it holds that $\deg V'\oplus W'\leq0$. It is stable
 if it is semistable and the inequality is strict. It is said to be polystable if it is semistable 
 and it decomposes as a direct sum of stable $\U(p_i,q_i)$-Higgs bundles of degree $0$ for suitable
 integers $p_i$ and $q_i$.
\end{defi}
Let $\moduli{\SU(p+1,p)}$ be  the moduli space of $\SU(p+1,p)$-Higgs bundles,  
which is defined to be the
space of 
isomorphism classes of polystable $\SU(p+1,p)$-Higgs bundles \cite{BGGUpq}.

The degree $\deg W$ is a topological invariant of $\moduli{\SU(p+1,p)}$ which hence identifies connected components of the moduli space. 
By Theorem 6.1 in \cite{BGGUpq} the following  \textbf{Milnor-Wood inequality} is satisfied by polystable Higgs bundles:
\begin{equation}\label{eq MW}
0\leq|\deg(W)|\leq p(g-1).
\end{equation}
\begin{rk}
 This invariant is half the Toledo invariant defined in \cite{BGGUpq}, as in the
 fixed determinant case $\deg (W)=-\deg V$. 
\end{rk}
Choose a maximally anisotropic Cartan subalgebra  $(\la')^\C\cong\C^p$ (cf. Defin \ref{defi max aniso}), and let
$(\ld')^\C\cong\C^{2p}$ be a $\theta'$-invariant Cartan subalgebra containing $(\la')^\C$, where $\theta'$ is defined as in (\ref{eq theta 2}). By Lemma \ref{lm invars}, 
imply that \textbf{Hitchin map} (\ref{eq rhitchin gral}) specifies to 
\begin{equation}\label{eq hit moduli}
h:\moduli{\SU(p+1,p)}\to H^0(X,\oplus_{k=1}^{p}K^{2k}) 
\end{equation}
which maps each pair $(E,\phi)$ to the characteristic coefficients of $\phi$, $(a_2,\dots, a_{2p})$. 
More specifically,
$$
a_{2i}=tr(\wedge^{2i}\phi)=
2^itr\left(\wedge^i(\beta\wedge\gamma)\right).
$$
On the other hand, the \textbf{complex Hitchin map }(\ref{eq chitchin gral}) 
reads
$$
h_\C:\moduli{\SL(2p+1,\C)}\to H^0(X,\oplus_{i=1}^{2p}K^i), \ (E,\phi)\mapsto (tr\wedge^i\phi)_{i=1}^{2p}.
$$
Note that $tr\wedge^i\phi=0$ if $2\not|\ i$ for $\phi$ as in (\ref{eq HF}), so we have indeed that $\kappa$ commutes with the respective Hitchin maps.  In what follows, we describe the fibers
of the restriction 
of $h_\C$ to $\kappa(\moduli{\SU(p+1,p)})$ by means of cameral techniques, then recovering the spectral curve construction. 
%
\section{Cameral data for $\SU(p+1,p)$-Higgs bundles}
In this section we compute tha cameral data for $\SU(p+1,p)$-Higgs bundles using the results in Section \ref{section cameral data} to $\SU(p+1,p)$. For simplicity, 
we  realize  $\SU(p+1,p)$ as the subgroup of fixed points whose Cartan involution is  $\theta$ (cf. Appendix \ref{section appendix}).

\subsection{Cameral covers}
Let  $\omega\in B_{\SU(p+1,p)}$. To define the associated cameral cover (\ref{eq cam cover}),  
let
$$
\ld^\C\otimes K\cong K^{\oplus 2p}\cong\{(l_1,\dots,l_{2p+1})\in K^{\oplus 2p+1}\ :\ \sum l_i=0\};
$$
then, we have the projection 
$$
\ld^\C\otimes K\to\ld^\C\otimes K/W,\qquad (l_1,\dots,l_{2p+1})\mapsto (\sigma_1(l_i),\dots,\sigma_{2p+1}(l_i))
$$
where $\sigma_i$ denotes the $i$-th symmetric polynomial in $2p+1$ variables.

In particular, the \textbf{cameral cover} associated to $\omega\in B_{\SU(p+1,p)}$ is
\begin{equation}\label{eq cam cover}
\specc_\omega=\left\{(\lambda_1,\dots, \lambda_{2p+1})\in K^{\oplus 2p+1}\ \left|\ \begin{array}{l}
                                                                     \sum_k\lambda_k=0,\\
                                                                    \sigma_{2i}(\lambda_j)=\omega_i,\\
                                                                    \sigma_{2i+1}(\lambda_j)=0.
                                                                    \end{array}\right.\right\}
\end{equation}
As for $\specc_{SU(p+1,p)}$ (\ref{eq real cam}), it corresponds to the subscheme
\begin{equation}\label{eq real cam su} 
\specc_{SU(p+1,p)}=\left\{(\lambda_1,\dots, \lambda_{2p+1})\in \specc_\omega
\ \left|\ \begin{array}{l}
          \lambda_i=-\lambda_{p+1+i},\\
          \lambda_{p+1}=0.\end{array}\right.\right\}.
\end{equation}
Note that the vanishing of $\sigma_{2p+1}(\lambda_1,\dots,\lambda_{2p+1})$ distinguishes $2p+1$
Galois subcovers determined
by $\specc_i=\{\lambda_i=0\}$ whose Galois group is 
$S_{2p}\subset W=S_{2p+1}$. In particular, $\specc_{SU(p+1,p)}\subset\specc_{p+1}$. 
\begin{rk}\label{rk XSU}
For any $(l_i)\in \specc_\omega$, $\lambda\in\{l_i\ :\ i\}$ if and only if $-\lambda\in\{l_i\ :\ i\}$,
so it follows that generically over $B_{\SU(p+1,p)}$, $\specc_{SU(p+1,p)}$ is 
irreducible and all other irreducible components in $\specc_{p+1}$ are obtained by translating $\specc_{SU(p+1,p)}$ by elements
of $W/W(\la^\C)=S_{2p+1}/S_p\ltimes\Z_2^p$.  
\end{rk}
\begin{ex} 
In the rank one case, $\SU(2,1)$, the projection $K\oplus K\to K^3\oplus K^2\cong\ld^\C\otimes K/W$ reads
$$
(l,l')\cong \left(\begin{array}{ccc}
                     l+l'&0&0\\
                     0&-2l&0\\
                     0&0&l-l'
                    \end{array}
                    \right)\mapsto (l^2-(l')^2-4l'l,l'((l')^2-l^2))
$$
Thus, any cameral cover corresponding to a real Higgs bundle (with corresponding point of the Hitchin base
$\omega\in K^2$) satisfies 
$$l'((l')^2-l^2)=0.$$
Namely, we have three
subcovers
$$
\specc_1=\{l^2=\omega,l'=0\},\quad \specc_2=\{l'=l, -4l^2=\omega\},
$$
$$
\specc_3=\{l'=-l, 4l^2=\omega\}.
$$
Note that $\specc_1=\specc_{\SU(2,1)}$.
All three are double covers, with involutions induced by elements of the Weyl group:
$(1,3)\in S_3$ restricts to the cover involution
on $\specc_1$, as so do $(1,2)$ on $\specc_2$ and $(2,3)$ on $\specc_3$. 
 \end{ex}
\subsection{Cameral data}\label{section cam data}
Applying Theorem \ref{thm cameral global}, we have a correspondence between everywhere regular $\SU(p+1,p)$-Higgs bundles $(E,\phi)$
whose image via $h_{\SU(p+1,p)}$ is $\omega\in B_{\SU(p+1,p)}$ (note that no polystability condition
is assumed) and 
$(\C^\times)^{2p}$-principal bundles  $P\to\specc_\omega$ satisfying the equivariance conditions: 
\begin{equation}\label{eq cocyclic rels}
(i,j)^*(P\otimes \mc{R}_{ij})^{(i,j)}\cong P,\qquad  P|_{\specc_{SU(p+1,p)}}\cong P|_{\specc_{SU(p+1,p)}}^{w_0}\otimes R_{w_0}. 
\end{equation}
In the case under consideration, regularity just means that eigenspaces for the standard representation have dimension one. In the semisimple case, this is just the fact that eigenvalues are different. As for non 
semisimple elements, it means that the nilpotent part is $\SL(2p+1,\C)$-conjugate to an element with $1$'s over the diagonal on a block of the matrix. As for the semisimple part, 
it should have different eigenvalues.

By definition, the involution $\theta$ acts as multiplication by  $-1$ on $\m^\C$ and $+1$ on $\h^\C$, so that the element $w_0\in W$ defined as in condition CD2 in Theorem \ref{thm cameral global}
has the form
\begin{equation}\label{eq w0}
 w_0=\prod_{i=1}^p(i,p+1+i).
\end{equation}
 \begin{thm}\label{prop gric cam data}
 Let $\omega\in B_{SU(p+1,p)}$ be such that $\specc_{SU(p+1,p)}$ is smooth. Then, the choice of a cameral datum
 $P_0$ establishes a correspondence between regular
 $\SU(p+1,p)$-Higgs bundles mapping to $\omega$ and the subvariety of $H^1(\specc/\theta,T^\C)$ consisting of elements 
 $Q$ such that $w^*q^*Q(D^\C)^w\cong Q(D^\C)$ for all $w\in W$. Here $\specc\stackrel{q}{\to}\specc/\theta$ is the 
 quotient curve, and $Q(D^\C)^w=Q(D^\C)\times w D^\C$.
 \end{thm}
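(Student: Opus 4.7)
The plan is to combine Theorem~\ref{thm cameral global} with a torsor argument in order to convert the description of cameral data as $D^\C$-bundles on $\specc_\omega$ into one in terms of $T^\C$-bundles on the quotient curve $\specc/\theta$. By Theorem~\ref{thm cameral global}, regular $\SU(p+1,p)$-Higgs bundles over $\omega$ correspond bijectively to principal $D^\C$-bundles $P$ on $\specc_\omega$ satisfying CD1 (twisted $W$-equivariance) and CD2 ($-\theta$-equivariance on the real subcover, twisted by $R_{w_0}$). Since $D^\C$ is abelian, fixing the cameral datum $P_0$ turns this set into a torsor over the abelian group of $D^\C$-bundles $Q$ on $\specc_\omega$ satisfying the \emph{untwisted} analogues
\[
w^* Q^w \cong Q \quad (\forall w\in W), \qquad (-\theta)^* Q^{w_0}\big|_{\specc_{\SU(p+1,p)}} \cong Q\big|_{\specc_{\SU(p+1,p)}},
\]
the ramification bundles cancelling in the ratio $P\otimes P_0^{-1}$. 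It therefore suffices to identify this group with the stated subvariety of $H^1(\specc/\theta, T^\C)$.

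Next, I would decompose $D^\C = A^\C\times T^\C$. By (\ref{eq w0}), the class of $\theta$ in $W$ is exactly $w_0$, so its adjoint action on $D^\C$ is trivial on $T^\C$ and inversion on $A^\C$. Using Remark~\ref{rk cocyclic rels} to extend CD1 to $w_0$, the two conditions above force the $T^\C$-projection $Q_T$ of $Q$ to be $\theta$-equivariant on $\specc_\omega$, hence to descend through $q$ to an element of $H^1(\specc/\theta, T^\C)$. The outstanding CD1 equivariances for general $w\in W$ translate directly into the compatibility $w^* q^* Q(D^\C)^w \cong Q(D^\C)$ required in the statement. Conversely, given $Q\in H^1(\specc/\theta, T^\C)$ satisfying this compatibility, the $D^\C$-bundle $q^* Q(D^\C)$ on $\specc_\omega$ obtained by induction along $T^\C\hookrightarrow D^\C$ and pullback by $q$ automatically satisfies the untwisted CD2 (since $w_0$ acts trivially on $T^\C$), producing a cameral datum $P_0\otimes q^* Q(D^\C)$ and therefore a Higgs bundle.

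The main technical step, and the one I expect to be hardest, is verifying that the $A^\C$-projection $Q_A$ of $Q$ contributes no moduli beyond what is absorbed in $P_0$. The CD2 condition on $Q_A$ reads $(-\theta)^* Q_A\otimes Q_A\cong \mc{O}$ on $\specc_{\SU(p+1,p)}$, a Prym-type equation whose solution space is a priori positive-dimensional. To resolve this I would combine this Prym relation with the CD1 equivariances under the reflections $s_\alpha$ associated to real roots (which act non-trivially on $A^\C$), together with the decomposition of $\specc_\omega$ from Remark~\ref{rk XSU} into Weyl-translates of $\specc_{\SU(p+1,p)}$, and exploit the quasi-split structure of $\SU(p+1,p)$ (Remark~\ref{rk qs}, characterised by $\Delta\cap i\lt^* = \{0\}$) to conclude that $Q_A$ is pinned down to a torsion subgroup that is absorbed by the choice of basepoint $P_0$. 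The hypothesis that $\specc_{\SU(p+1,p)}$ is smooth ensures that the descent picture on $\specc/\theta$ is well-behaved throughout.
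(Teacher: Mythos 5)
Your overall framework---passing to $Q=PP_0^{-1}$ so that the ramification twists cancel, reducing to untwisted $W$- and $\theta$-equivariance, splitting $D^\C=A^\C T^\C$ according to the $(\mp1)$-eigenspaces of $\theta$, and descending the $T^\C$-part through $q$---is the same as the paper's. The genuine gap is precisely the step you flag as ``the main technical step'' and then only sketch: showing that the $A^\C$-component carries no moduli. The paper does not resolve this by a Prym/torsion analysis on $\specc_{\SU(p+1,p)}$ alone; it first upgrades the CD2 condition (which a priori lives only on the subcover $\specc_{\SU(p+1,p)}$) to a global statement $\theta^*Q\cong Q$ on all of $\specc_\omega$. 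Concretely, one decomposes $\specc_\omega$ into the Weyl translates $\specc_w=w\cdot\specc_{\SU(p+1,p)}$ of Remark~\ref{rk XSU} and uses the cocyclic relations (\ref{eq cocyclic rels}) to compute $\theta^*(Q|_{\specc_w})=Q|_{\specc_{w\theta}}$; since $\theta$ permutes the components in exactly this way, these restrictions assemble into a global isomorphism $\theta^*Q\cong Q$. Combining this with $w_0^*Q=\theta^*Q\cong Q^{w_0}$ and the extended CD1 relation $w_0^*Q^{w_0}\cong Q$ forces an actual reduction of the structure group to $T^\C=(D^\C)^\theta$, after which Kempf's descent lemma (the $\theta$-action on $T^\C$ being the identity over the fixed locus of $w_0$) yields the element of $H^1(\specc/\theta,T^\C)$. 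Without this global equivariance your $Q_A$ is only constrained by $Q_A\cong Q_A^{-1}$ on the real subcover, which leaves genuine two-torsion unaccounted for.

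Moreover, the exit you propose for that residual torsion---that it is ``absorbed by the choice of basepoint $P_0$''---would not work even if you proved $Q_A$ torsion: the basepoint of a torsor only identifies the fiber with the acting group, and nontrivial torsion elements of that group would still parametrize distinct, non-isomorphic Higgs bundles, so they would have to appear as an extra finite factor in the answer, contradicting the statement. The $A^\C$-part must be shown to be actually trivial, as the paper's reduction-of-structure-group argument achieves, not merely finite.
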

\begin{proof}
 First note that the assignation
$$
P\mapsto PP_0^{-1}
$$
establishes a morphism $Cam\to H^1(\specc,D^\C)^W$, where the action of $W$ on $H^1(\specc, D^\C)$ 
is given by $w\cdot Q=w^*Q\times_wD^\C$.

Denote by $\specc_w=w\cdot \specc_{\SU(p+1,p)}$ (\ref{eq real cam su}) $Q_w:=Q|_{\specc_w}$. Then: 
$$
\theta^*Q_w=\theta^*w^*Q_e^w\stackrel{(\ref{eq cocyclic rels})}{=}\theta^*w^*Q_e^{w\theta}=Q_{w\theta}.
$$
Given that $\theta$ exchanges $\specc_w$ and $\specc_{w\theta}$, it follows that $\theta^*Q\cong Q$. 
Moreover, since $w_0^*Q=\theta^*Q\cong Q^{w_0}$, it follows that the structure group reduces to 
$T^\C$, as also $w_0^*Q^{w_0}\cong  Q$. This finishes the proof, as
the action of $\theta$ on $T^\C=(D^\C)^\theta$ is by definition the identity on the fixed locus of $w_0$, and so Kempf's descent Lemma (Theorem 2.3 \cite{DNPicard}) applies.
\end{proof}
\section{Spectral data}\label{section spectral}
In the case of matrix groups, Hitchin's spectral techniques are available for abelianization
of Higgs bundles. In this section, we compute spectral data for $\SU(p+1,p)$.

Let $(E,\phi)\in\moduli{\SU(p+1,p)}$ be such that $h(\phi)=(\omega_2,\dots,\omega_{2p})$. 
The characteristic
polynomial of $\phi$ over the total space of the canonical bundle $\pi:|K|\to X$ produces a section 
$$
s_\omega:=\lambda(\lambda^{2p}+\pi^*\omega_2\lambda^{2p-2}+\dots+\pi^*\omega^{2p})\in H^0(|K|,\pi^*K^{2p+1})
$$
vanishing over the \textbf{spectral curve} 
\begin{equation}\label{eq spectral}
\spec:=Spec\left(\mbox{Sym}^\bullet\left(K^*/\lambda(\lambda^{2p}+\sum_i\lambda^{2(p-i)}\pi^*\omega_{2i})\right)\right).
\end{equation}
The generic spectral curve $\spec$ is reduced and consists of two smooth irreducible components 
\begin{eqnarray}\label{eq irred comps}
\spec_0\cong Spec\left(\mbox{Sym}^\bullet(K^*/\lambda)\right)\cong X,\\\nonumber
\spec_1:=Spec(\mbox{Sym}^\bullet(K^*)/(\lambda^{2p}+\sum_i\lambda^{2(p-i)}\omega_{2i})).
 \end{eqnarray}
 \begin{rk}\label{rk genericity hypo}
Remark \ref{rk XSU} implies that the genericity hypothesis for $\spec$ to be of the above form is the same as the one for 
$\specc_{\SU(p+1,p)}$ being smooth,  and also for generic regularity of $(E,\phi)$.  
\end{rk}
By Remark \ref{rk genericity hypo}, the kernel of the Higgs field has rank one, so $V\subset E$ is an extension 
\begin{equation}\label{eq ext kernel}
0\to E_0\to V\to V_1\to 0 
\end{equation}
where $E_0=\Ker(\phi)\in \mathrm{Pic}(X)$; the Higgs field induces one on $E_1=V_1\oplus W$, so that we obtain $(E_1,\phi_1)$ an induced $\U(p,p)$-Higgs bundle,
which is regular by Remark \ref{rk genericity hypo} and a general result of Ng\^o's \cite{NgoLemme} and Arinkin 
(private communication). 
Moreover, regularity implies that
$\phi_1=(\beta_1,\gamma_1)$ induces generic isomorphisms
$$
\beta_1:W\to V_1\otimes K, \qquad \gamma_1:V_1\to W\otimes K.
$$
Taking determinants, we obtain subdivisors of the branching locus 
$B=\{\omega_{2p}=0\}$
\begin{equation}
 \label{eq subdivisor branching}
B_\beta=(s_\beta), \ B_\gamma=(s_\gamma)
\end{equation}
given by
vanishing of 
$$
s_\gamma:=\wedge^p\gamma_1\in H^0(X,\wedge^pV_1^{-1}\otimes \wedge^pW\otimes K^{p})
$$ 
and
$$
s_\beta:=\wedge^p\beta_1\in H^0(X,\wedge^pW^{-1}\otimes \wedge^pV_1\otimes K^{p}).
$$
\begin{rk}\label{rk divisors}
 The ramification divisor consists of points over which the Higgs field $\phi$ is not semisimple. Let $B_\beta^{reg}\subset B_\beta$ denote the
 subdivisor over which $\phi$ is regular. That is, $\beta|_{B_\beta^{reg}}:W\projects E_0K$, as otherwise the kernel 
 would increase its dimension by one. We let 
 $B_\beta^0:=B_\beta\setminus B_\beta^{reg}$.
 
 As for $B_\gamma$, the genericity hypothesis implies it is disjoint from $B_\beta$. Denote by
 $B_\gamma^{reg}\subset B_\gamma$ the subset over which $\phi$ remains regular. This implies that 
 $E_0\otimes \mc{O}_{B_\gamma^{reg}}\plonge V_1\otimes \mc{O}_{B_\gamma^{reg}}$, or, in other words, so that the kernel has still rank $1$. Let
 $B_\gamma^0:=B_\gamma\setminus B_\gamma^{reg}$.
\end{rk}
 
 From now on we will assume that
 \begin{equation}\label{eqn B=Breg}
B_{\beta/\gamma}=B_{\beta/\gamma}^{reg},  
 \end{equation}
so that the Higgs field is everywhere regular and $\beta|_{B_\beta}:W|_{B_\beta}\projects E_0K|_{B_\beta}$ over the whole
 $B_\beta$ and  $i:E_0|_{B_\gamma}\plonge V_1|_{B_\gamma}$ over ${B_\gamma}$. See Section \ref{section non reg} for remarks on 
 the general case.
By Remark \ref{rk divisors}, we have that the extension $[V]\in H^1(X,E_0^{-1}V_1)$ defined by (\ref{eq ext kernel}) 
 can be recovered as the image
via the   Bockstein map of the inclusion
$f\in H^0(X,E_0^{-1}V_1\otimes\mc{O}_{B_\gamma})$.
We recall that the Bockstein operator
\begin{equation}\label{eq bockstein}
 b:H^0(B_\gamma,E_0^{-1}V_1)\to H^1(X,E_0^{-1}V_1),\qquad f\mapsto [V],
\end{equation}
is obtained by considering the long exact sequence induced from the short exact sequence
$$
\xymatrix{
0\ar[r]&E_0^{-1}V_1\ar[r]& E_0^{-1}V_1(B_\gamma)\ar[r]&E_0^{-1}V_1\otimes O_{B_\gamma}\ar[r]&0.
}
$$
We next give an alternative approach to $\phi$. Consider the commutative diagram of extensions: 
$$
\xymatrix{
0\ar[r]&W^{-1}E_0K(-B_\beta)\ar[r]& W^{-1}E_0K\ar[r]\ar[d]&W^{-1}E_0K\otimes O_{B_\beta}\ar[d]\ar[r]&0\\
0\ar[r]&W^{-1}VK(-B_\beta)\ar[r]&W^{-1}VK\ar[r]\ar[d]&W^{-1}VK\otimes O_{B_\beta}\ar[d]\ar[r]&0\\
0\ar[r]&W^{-1}V_1K(-B_\beta)\ar[r]& W^{-1}V_1K\ar[r]&W^{-1}V_1K\otimes O_{B_\beta}\ar[r]&0.
}
$$
Studying the corresponding long exact sequences, we see that a necessary and sufficient condition 
for $\beta_1\in H^0(X,W^{-1}V_1K)$ and $p\in H^0(X, W^{-1}E_0K\mc{O}_{B_\beta})$
to determine $\beta\in H^0(X,W^{-1}VK)$ is
$$
p\in \Ker\left(H^0(X,W^{-1}E_0K\otimes O_{B_\beta})\to H^1(X,W^{-1}E_0K(-B_\beta))\right)
$$
and
$$
\beta_1\in \Ker\left(H^0(X,W^{-1}V_1K)\to H^1(X,W^{-1}E_0K)\right).
$$
This proves the following
\begin{prop}\label{prop char Higgs}
 An everywhere regular $\SU(p+1,p)$-Higgs bundle $(E,\phi)$ is equivalent to the following piece of data:
 
 1. An everywhere regular $\U(p,p)$-Higgs bundle $(E_1,\phi_1)$, where 
 $$E_1=V_1\oplus W$$
 and 
 $$
 \phi_1=(\beta_1,\gamma_1),\ \beta_1:W\to V_1K,\ \gamma_1:V_1\to WK.
 $$
 2. An embedding $i:H^0(B_\gamma,E_0^{-1}V_1)$, where $E_0=(\det W\det V_1)^{-1}$ and $B_\gamma$ is given by 
 (\ref{eq subdivisor branching}) determining an extension $V\in H^1(X,E_0^{-1}V_1)$.
 
 3. A map 
 \begin{equation}\label{eq conds p}
    p\in \Ker\left(H^0(X,W^{-1}E_0K\otimes O_{B_\gamma})\to H^1(X,W^{-1}E_0K(-B_\gamma))\right)
 \end{equation}
 
 4. The Higgs field $\phi_1$ should satisfy
\begin{equation}\label{eq conds on phi1}
\phi_1\in \Ker\left(H^0(X,\End(E_1)\otimes K)\to H^1(X,\Hom (E_1,E_0)K)\right).
\end{equation}
\end{prop}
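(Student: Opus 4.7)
The plan is to establish the correspondence by explicit construction in both directions, essentially assembling the observations made in the discussion leading up to the proposition.

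For the forward direction, starting from an everywhere regular $\SU(p+1,p)$-Higgs bundle $(E,\phi)$, I would extract each piece of data as follows. By Remark \ref{rk genericity hypo}, the kernel $E_0=\Ker(\phi)$ is a line subbundle of $V$, giving the extension (\ref{eq ext kernel}) with quotient $V_1$. The Higgs field restricts to $\phi_1=(\beta_1,\gamma_1)$ on $E_1=V_1\oplus W$, and the regularity of $(E_1,\phi_1)$ as a $\U(p,p)$-Higgs bundle follows from the result of Ng\^o and Arinkin cited before the proposition. The embedding $i$ is the inclusion $E_0|_{B_\gamma}\hookrightarrow V_1|_{B_\gamma}$ identified in Remark \ref{rk divisors}, and $p$ is obtained from the regular surjection $\beta|_{B_\beta}:W|_{B_\beta}\twoheadrightarrow E_0K|_{B_\beta}$. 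The kernel conditions (\ref{eq conds p}) and (\ref{eq conds on phi1}) then follow automatically, because $p$ and $\phi_1$ come from the globally defined $\beta$ and $\phi$, hence lie in the kernel of the connecting maps in the long exact cohomology sequences of the three-row diagram just before the proposition.

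For the reverse direction, given the data $(E_1,\phi_1,i,p)$, I would first use the Bockstein operator (\ref{eq bockstein}) applied to $i$ to recover the extension class $[V]\in H^1(X,E_0^{-1}V_1)$, hence the rank $p+1$ bundle $V$ fitting in (\ref{eq ext kernel}). The condition (\ref{eq conds on phi1}) is precisely what is needed for $\gamma_1:V_1\to WK$ to lift through the quotient $V\twoheadrightarrow V_1$ to a map $\gamma:V\to WK$, since this lift exists if and only if the obstruction in $H^1(X,\Hom(E_1,E_0)K)$ vanishes. Dually, the commutative diagram of the three rows for $W^{-1}(\cdot)K$ shows that condition (\ref{eq conds p}) together with the second kernel condition ensure that $\beta_1$ and $p$ glue to a section $\beta\in H^0(X,W^{-1}VK)$. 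Packaging this as $\phi=\begin{pmatrix}0&\beta\\\gamma&0\end{pmatrix}$ on $E=V\oplus W$ produces an $\SU(p+1,p)$-Higgs bundle, with $\det(V\oplus W)=\mathcal{O}_X$ enforced by the choice $E_0=(\det W\det V_1)^{-1}$.

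The remaining task is to check that these two constructions are mutually inverse and that the resulting $(E,\phi)$ is indeed everywhere regular, not merely generically so. Mutual inversion is essentially tautological from the explicit formulas: the Bockstein extension class is by construction the one determined by the inclusion $i$, and the diagram chase that recovers $\beta$ from $(\beta_1,p)$ is by construction the inverse of the projection extracting $(\beta_1,p)$ from $\beta$. Everywhere regularity of $(E,\phi)$ reduces to checking regularity at points of $B_\beta\cup B_\gamma$, which is enforced exactly by the standing hypothesis (\ref{eqn B=Breg}) together with the fact that $i$ is an embedding and $p$ arises from a surjection $W|_{B_\beta}\twoheadrightarrow E_0K|_{B_\beta}$.

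The main obstacle I anticipate is the careful bookkeeping of the connecting homomorphisms in the three-row diagram to verify that the kernel conditions on $p$ and on $\phi_1$ are not only necessary (as already stated in the text) but also sufficient, and that the global $\beta$ produced is compatible with the extension $V$ obtained from the Bockstein of $i$. This compatibility is essentially the commutativity of the Bockstein with the appropriate snake-lemma maps, and amounts to verifying that the three-row diagram and the Bockstein sequence fit into a single coherent diagram of long exact sequences.
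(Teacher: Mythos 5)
Your proposal follows the paper's own route: the paper proves this proposition precisely via the three-row commutative diagram and the Bockstein discussion immediately preceding the statement, reading off the two kernel conditions as the necessary and sufficient obstructions to reassembling $\beta$ and the extension $V$ from $(\beta_1,p)$ and $i$, and your forward/backward constructions and the compatibility check are exactly that argument made explicit. One small correction: condition (\ref{eq conds on phi1}) is not the obstruction to lifting $\gamma_1$ (since $E_0=\Ker\gamma$, the map $\gamma$ is recovered from $\gamma_1$ by precomposing with $V\twoheadrightarrow V_1$, with no obstruction); its content is the obstruction to lifting $\beta_1:W\to V_1K$ to $\beta:W\to VK$ through $0\to E_0\to V\to V_1\to 0$, i.e.\ the second kernel condition of the pre-proposition discussion.
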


Now, the involution $\theta'$ (see (\ref{eq theta})) induces an involution on $\spec$ sending $\lambda\mapsto-\lambda$.
 Let 
\begin{equation}\label{eq quotient spectral}
\ol{X}=\spec/\theta,
\end{equation}
 whose irreducible components read
\begin{equation}\label{eq irred comps quotient spectral}
 \ol{X}_0\cong X ,\qquad \ol{X}_1\cong \spec_1/\theta.
\end{equation}
We have a diagram
\begin{equation}\label{eq diagram curves}
\xymatrix{
\spec\ar[r]^{\tilde{\pi}}\ar[dr]_{\pi}&\ol{X}\ar[d]^{\ol{\pi}}\\
&X.
} 
\end{equation}
The ramification divisor of $\tilde{\pi}: \spec\to\ol{X}$
is given by $R=\{\pi^*\omega_{2p}=0\}$ and generically equals $\spec_0\cap\spec_1$. 
The divisor $R$ is also the ramification divisor of $\widetilde{\pi}|_{\spec_1}$.
Let $F\in\Pic(\spec)$ be the line bundle defined by:
\begin{equation}\label{eq spectral datum}
0\to F(-R)\to \widetilde{\pi}^*E\stackrel{\lambda(\lambda Id-\widetilde{\pi}^*\phi)}{\to}\widetilde{\pi}^*(E\otimes K)\to F\otimes\widetilde{\pi}^*K\to 0. 
\end{equation}
\begin{rk}\label{rk line bunds on irr curves}
 We need only remark that the torsor structure of the fibers as described in Theorem \ref{thm spectral data} comes from the decomposition of $\Pic(\spec)$ as a torsor over
$\Pic(\spec^n)$, the Picard variety of its normalization, which in this case is just $\spec^n=\spec_0\sqcup\spec_1\stackrel{p}{\to}\spec$. To see this, consider the short exact sequence
\begin{equation}\label{eq SES}
 0\to\mc{O}_{\spec}^\times\plonge p_*\mc{O}_{\spec^n}^\times\projects p_*\mc{O}_{\spec^n}^\times/\mc{O}_{\spec}^\times\to 0
\end{equation}
which induces a long exact sequence in cohomology
\begin{equation}\label{eq LES}
0\to H^0(\spec, \mc{O}_{\spec}^\times)\to H^0(\spec, p_*\mc{O}_{\spec^n}^\times)\to
H^0(\spec^{sing}, p_*\mc{O}_{\spec^n}^\times/ \mc{O}_{\spec}^\times)\to
\end{equation}
$$
\phantom{0}\to H^1(\spec, \mc{O}_{\spec}^\times)\to H^1(\spec, p_*\mc{O}_{\spec^n}^\times)\to 0
\phantom{H^0(\spec^{sing}, p_*\mc{O}_{\spec^n}^\times)}.
$$
See Section 9.2 in \cite{Neron} for details. 

Geometrically, line bundles on $\spec$ are given by line bundles on its normalization with suitable automorphisms
over the singular points. In our case, the $\mc{O}_{\spec}$-module structure of the line bundle $F\to\spec$ consists of an $\mc{O}_{\spec_i}$-module structure on $F_i$ on the respective
irreducible components together with an isomorphism $f:F_0|_{\spec_{01}}\cong F_1|_{\spec_{01}}$. Note that $R=\spec_{01}$ with our genericity hypothesis.
\end{rk}
Note that $F_0=F|_{\widetilde{X}_0}$ is the kernel of $\pi^*\phi$, and $F_1=F|_{\widetilde{X}_1}$ is the spectral datum for the induced $\U(p,p)$-Higgs
bundle 
$(E_1,\phi_1)$ (see discussion following Remark \ref{rk line bunds on irr curves}).  

It is easy to see that the spectral datum $F$ satisfies $\theta^*F\cong F$.

We next summarize some results of \cite{SchapUpp} for the group $\U(p,p)$, as the line bundle $F_1\to\spec_1$ (\ref{eq irred comps}) 
is the spectral datum corresponding
to the $\U(p,p)$-Higgs bundle
$(E_1,\phi_1)$. 
Define the subdivisors $R_+$ and $R_-$, where $R_+$ is the set of points 
over which
$\theta$ acts on the fibers of $F_1$ via the identity and $R_-$ the set of points where it lifts as multiplication by 
$-1$. 
 This determines
$\tilde{\pi}_*F_1\cong F_+\oplus F_-$, with $F_-^{-1}\otimes F_+\otimes \ol{\pi}^*K\cong\mc{O}(R_+)$.
As for $\phi_1$, the $\mc{O}_{\specc_1}=\tilde{\pi}^*\mc{O}_{\ol{X}_1}$-module structure of $F_1$ 
induces a $\tilde{\pi}_*\mc{O}_{\spec_1}=\mc{O}\oplus \ol{\pi}^*K^{-1}$-module structure on $\tilde{\pi}_*F_1$, 
totally determined by the action of 
$\ol{\pi}^*K^{-1}$. 
Since the involution $\theta$ acts by $-1$ on $\pi^*K$ (as by definition $\lambda\mapsto-\lambda$)
we get $s_{\pm}:\ol{\pi}^*K^{-1}\otimes F_{\pm}\to F_{\mp}$.
We have that $W=\ol{\pi}_*F_-$, $V_+=\ol{\pi}_*F_+$, $\phi_1=(\ol{\pi}_*s_+,\ol{\pi}_*s_-)$. 

We will use this to recover the $\SU(p+1,p)$-Higgs bundle. By Remark \ref{rk line bunds on irr curves},
the $\mc{O}_{\spec}$ structure of the line bundle $F\to\spec$ pushes forward to a
$\mc{O}_X$-module structure on $F_0$, and a $\mc{O}_{\ol{X}_1}\oplus\ol{\pi}^*K^{-1}=\widetilde{\pi}_*\mc{O}_{\spec_1}$-module
structure on $\widetilde{\pi}_*F_1=F_+\oplus F_-$. Away from ramification, the latter is generated by the action of
$\ol{\pi}^*K^{-1}$, yielding $s_{\pm}$. Denote $\ol{R}:=\widetilde{\pi}_*R$; since the involution is trivial on the irreducible
component $\spec_0$ over which the kernel lives and it becomes trivial on the quotient, 
we get 
the following $\widetilde{\pi}_*\mc{O}_{R}=\mc{O}_{\ol{R}}\oplus\ol{\pi}^*K^{-1}$-module structure 
on $\widetilde{\pi}_*F|_{\widetilde{R}}$: on the one hand, the usual action of
$\mc{O}_{\ol{R}}$, and the restriction of $s_\pm$ to ramification
$$
s_{\pm}:\ol{\pi}^*K^{-1}\otimes F_{\pm}\to F_{\mp}.
$$
On the other hand, isomorphisms induced from $f$ as in Remark \ref{rk line bunds on irr curves}:
$$
f_-:\ol{\pi}^*K^{-1}\otimes F_-\cong F_0
$$
on $\ol{R}_-$, and
$$
f_+: F_+\cong F_0.
$$
on $\ol{R}_+$.
\begin{rk}
By Remark \ref{rk divisors}, we could also consider just morphisms, corresponding to generically (but not everywhere)
regular Higgs bundles.
\end{rk}
\begin{rk}\label{rk Rpm vs Rbetagamma}
Clearly $\ol{\pi}_*R_+=\ol{\pi}^*B_\gamma$, $\ol{\pi}_*R_-=\ol{\pi}^*B_\beta$.
\end{rk}
The above discussion suggests the following.
\begin{thm}\label{thm spectral data}
Let $\omega\in B_{SU(p+1,p)}$ be generic. Let $\pi:\spec_\omega\to X$ be the corresponding spectral curve, and
$\ol{\pi}:\ol{X}_\omega=\widetilde{X}_\omega/\theta\to X$. Denote by $\widetilde{X}_{01}$/$\ol{X}_{01}$ the intersection of both 
irreducible components (\ref{eq irred comps quotient spectral}). 

Let
$$\Pic(\spec_1)^\theta=\{F_1\in\Pic(\spec_1)\ :\ \theta^*F_1\cong F_1\},
$$
and let $\mc{P}$ be the $(\C^\times)^{4p(g-1)}$-torsor over 
with fiber over $F_1$ equal to $\Isom(\spec_{01}, L_0^{-1}F_1)$, with
\begin{equation}\label{eq L0 norm}
L_0=Nm(F_1)^{-1}\otimes K^{-p(2p+1)}\in\Pic(X). 
\end{equation}
With the notation of the preceeding paragraph:

1. There exists a correspondence between isomorphism classes of everywhere regular $\SU(p+1,p)$-Higgs bundles mapping via 
$h_{\SU(p+1,p)}$ to $\omega$ (note that we assume no stability
condition) and the elements of the quotient $\mc{P}/\C^\times$ where the action is given by
$$
\mu(L,\ol{z})\mapsto(L,\mu\ol{z}),\ (L,\ol{z})\in \mc{P}.
$$
2. Let $\widetilde{\pi}_*F_1=F_+\oplus F_-$. Then, the topological invariant (\ref{eq MW}) is 
$$
\deg W=\deg F_--2p(p-1)(g-1).
$$

3. The corresponding Higgs bundle is stable, thus  
$$
(2p^2-3p)(g-1)< |\deg F_-|<(2p^2-p)(g-1),
$$
and extremal values of the Milnor-Wood inequality (\ref{eq MW}) $\deg W=\pm p(g-1)$ are not met by regular points. 
\end{thm}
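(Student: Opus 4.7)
The plan is to assemble all three parts from Proposition \ref{prop char Higgs} combined with the $\U(p,p)$-spectral correspondence of \cite{SchapUpp} recalled above, following the dictionary suggested by the preceding discussion. For part 1, the strategy is to translate each piece of data in Proposition \ref{prop char Higgs} into line-bundle data on the normalization $\spec_0\sqcup \spec_1$ together with a gluing isomorphism along the singular locus $R$, using the long exact sequence (\ref{eq LES}) of Remark \ref{rk line bunds on irr curves}. The regular $\U(p,p)$-Higgs bundle $(E_1,\phi_1)$ is packaged, by \cite{SchapUpp}, as a $\theta$-invariant line bundle $F_1\in\Pic(\spec_1)^\theta$ with $\widetilde{\pi}_*F_1=F_+\oplus F_-$ recovering $V_1=\ol{\pi}_*F_+$ and $W=\ol{\pi}_*F_-$. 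The embedding $i$ of item 2 and the map $p$ of item 3 of Proposition \ref{prop char Higgs} should then be identified with the components $f_+$ over $\ol{R}_+$ and $f_-$ over $\ol{R}_-$ of the gluing isomorphism $f$ described just before the theorem, while condition (\ref{eq conds on phi1}) corresponds to $f$ intertwining the $\ol{\pi}^*K^{-1}$-actions through $s_\pm$ and $f_\pm$.

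Continuing with part 1, the determinant constraint $\det V\,\det W=\mc{O}_X$ (so $E_0=(\det V_1\det W)^{-1}$), combined with the pushforward determinant formula and the relation $F_-^{-1}F_+\otimes\ol{\pi}^*K\cong\mc{O}(R_+)$ extracted from $\widetilde{\pi}_*F_1$, should force exactly the identification $F_0=L_0$ in (\ref{eq L0 norm}); the residual freedom is then the choice of gluing $f$ at the $4p(g-1)$ nodes of $\spec$ (whose count follows from $\deg(\omega_{2p})=4p(g-1)$ on $X$), giving the $(\C^\times)^{4p(g-1)}$-torsor structure on $\mc{P}$. Global $\C^\times$-scaling of $f$ yields an isomorphic Higgs bundle, so the correspondence descends to $\mc{P}/\C^\times$. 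For part 2, Riemann--Roch for the degree-$p$ cover $\ol{\pi}:\ol{X}_1\to X$ gives $\deg W=\deg F_-+(1-g_{\ol{X}_1})-p(1-g)$, and $g_{\ol{X}_1}$ is obtained by applying Riemann--Hurwitz successively to $\widetilde{\pi}|_{\spec_1}:\spec_1\to\ol{X}_1$ (ramified along $R$) and $\spec_1\to X$, with the ramification data read off from the spectral polynomial (\ref{eq spectral}); routine arithmetic then collapses to $\deg W=\deg F_--2p(p-1)(g-1)$.

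For part 3, strict stability should follow from everywhere regularity of $\phi$: any $\phi$-invariant subbundle of $E$ arises from a subsheaf of $F$ under the spectral correspondence, and regularity together with the block decomposition forced by (\ref{eq HF}) excludes any such subbundle of non-negative degree. The quantitative bounds on $|\deg F_-|$ then follow by substituting the Milnor--Wood inequality (\ref{eq MW}) into the identity of part 2. Exclusion of the extremal values $\deg W=\pm p(g-1)$ should proceed by contradiction, since maximal Toledo bundles are known to have Higgs fields with a nilpotent component supported on a divisor, violating the everywhere-regularity hypothesis. The main obstacle, in my view, will lie in part 1: making precise that the three separate pieces of data in Proposition \ref{prop char Higgs} encode exactly the same information as the single gluing isomorphism $f$, and correctly tracking the twists producing $L_0$---the norm formula together with the analysis of the $\theta$-action on line bundles over the ramification of $\widetilde{\pi}$ is where most of the effort will go.
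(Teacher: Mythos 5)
Your proposal follows essentially the same route as the paper: line bundles on the reducible spectral curve are encoded via the normalization sequence as $(F_0,F_1)$ plus a gluing over the $4p(g-1)$ nodes, the determinant constraint pins down $F_0=L_0$, global scaling of the gluing gives the $\C^\times$-quotient, and the degree formula comes from the pushforward determinant (the paper uses $\det\ol{\pi}_*F_-=Nm(F_-)\otimes Nm(\ol{X}/X)$ with $Nm(\ol{X}/X)=K^{-p(p-1)}$, which is equivalent to your Riemann--Roch plus Riemann--Hurwitz computation). The one place you are looser than the paper is stability: rather than a bare appeal to regularity, one must identify the kernel $E_0$ as the unique nontrivial $\phi$-invariant subbundle (by irreducibility of $\spec_1$) and compute $\deg E_0=2\deg W-2p(g-1)-\deg B_\gamma<0$ from $\mc{O}(B_\gamma)\cong E_0^{-1}\det W^{-2}K^p$ and $B_\gamma\neq\emptyset$, the exclusion of the extremal Toledo values being quoted from Theorem 6.7 of \cite{BGGUpq} rather than derived from a nilpotency argument.
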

\begin{proof}
   To prove \textit{1.}, given $(E,\phi)$, we assign to it $F$ as in (\ref{eq spectral datum}). Generically, the spectral curve has smooth irreducible components,
  and so restriction induces on its Picard variety a structure of a $(\C^\times)^{4p(g-1)}$-torsor over 
  $\Pic(\spec_0)\times\Pic(\spec_1)$. See Remark \ref{rk line bunds on irr curves}.
  The topological restriction that $\det\pi_*F=0$
  implies (\ref{eq L0 norm}), by Corollary 3.12 and Lemma 3.5 in \cite{HP}.
  
%
%
  As for the inverse map, pushforward induces the remaining structure as explained in the discussion preceeding this theorem.
  There remains to check (a) that the construction descends to the quotient by the action of $\C^\times$, and (b) that conditions (\ref{eq conds on phi1}) and (\ref{eq conds p}) 
  in Proposition \ref{prop char Higgs} are satisfied for 
  $$
  \beta_1:=\ol{\pi}_*s_-\in H^0(X,\Hom(\ol{\pi}_*F_-\otimes \ol{\pi}_*F_+\otimes K)
  $$ and 
  $$
  p:=\ol{\pi}_*f_-\in H^0(X,\Hom(\ol{\pi}_*F_-, E_0\otimes K)
  $$
  Statement (b) follows by the fact that $\Hom(F_0, F_1)$ is a sheaf supported on $\spec_{01}$, thus 
  $H^1(\spec_{01},\Hom(F_0, F_1))=0$. As for (a),  an $\SU(p+1,p)$-Higgs bundle is determined by point  $L:=(L_0, L_1, f)\in \Jac(\spec)$, with the restriction that the degrees
  of $L_0$ and $L_1$ are such that the pushforward has degree zero. Now, for $\lambda\in \C^\times$ note that the morphism given by multiplication of $\lambda^{-1/2}$ on
  $L_0$ and $\lambda^{1/2}$ on $L_1$ send $(L_0,L_1,f)\mapsto (L_0,L_1,\lambda f)$; this yields isomorphic Higgs bundles.
  
Statement  \textit{2.} follows from $\det\ol{\pi}_* F_-=Nm (F_-)\otimes Nm(\ol{X}/X)$ together with $Nm(\ol{X}/X)=\bigotimes_{i=0}^{p-1}K^{-2i}=K^{-p(p-1)}$.

Finally, \textit{3.} follows because by irreducibility of $\spec_1$ and regularity, the only non-trivial 
$\phi$-stable subbundle is the kernel; by definition, for $B_\gamma$ as in (\ref{eq subdivisor branching}),  
$\mc{O}(B_\gamma)\cong E_0^{-1}\det W^{-2}K^p$, so it follows that 
$\deg E_0=2\deg W-2p(g-1)-\deg B_\gamma$, which is strictly smaller than $0$ by point \textit{2.} above and because  $B_\gamma\neq\emptyset$
by regularity. As for the strictness of the inequality, it follows from Theorem 6.7 in \cite{BGGUpq}.
\end{proof}
\begin{rk}\label{rk stability bundles}
We note that by reducibility of the spectral curve, there are always unstable Higgs bundles mapping to a point 
of the 
Hitchin base. Indeed, take any direct sum $(E_0, 0)\oplus (E_1,\phi_1)$, where $(E_1,\phi_1)$ is a 
$\U(p,p)$-Higgs bundle of 
degree  $\deg E_1=-\deg E_0$ mapping to $\omega\in \hbase_{\U(p,p)}$. Making $\deg E_0>0$ 
suffices to get
an unstable point. 

An interesting consequence of this is the lack of intrinsicity of the Milnor-Wood inequality with respect
to the spectral data, unlike what happens for 
$\U(p,p)$-Higgs bundles \cite{SchapUpp}, which is why point 2. in Theorem \ref{thm spectral data} is necessary.

The same phenomenon will show for all  forms of Hermitian non-tube type, as they all contain a 
maximal tube-type subgroup
of the same rank.
\end{rk}
\begin{rk}
The set we recover has the expected dimension. By the preceeding dicussion, using the sequence (\ref{eq LES}) we have
that the generic fiber $F_\omega$ has dimension $\dim F_\omega=\dim \Pic(\spec_1)^\theta+4p(g-1)-1$. Given that 
$\Pic(\spec_1)^\theta$ is a smooth fiber
of the Hitchin map for $\U(p,p)$,  and  by Theorem \ref{thm spectral data} 
$$
\dim F_\omega=4p(g-1)+(4p^2(g-1)+1-\dim B_{\SU(p,p)})-1,
$$
so that the set of all generic fibers has dimension
$$
\dim B_{\SU(p+1,p)}+\dim F_\omega=4p(p+1)(g-1)=\dim \SU(p+1,p)(g-1).
$$
\end{rk}

We can give another characterization in terms of data over $\ol{X}$:
\begin{cor}\label{prop another spec data}
There is a a one to one correspondence between  
isomorphism classes of regular Higgs bundles $(E,\phi)$ mapping to $\omega$ via $h_{\SU(p+1,p))}$ 
and the quotient by the action of $\C^\times$ on tuples 
$$
(F_-,s_+,s_-,f_+,f_-)
$$
where
1.  A line bundle $F_+\in \mathrm{Pic}(\ol{X}_1)$, subject to condition in 4. below.

2. Morphisms 
\begin{eqnarray}\label{eq conds spm}
s_+\in \Ker\left(H^0(\ol{X},F_+^{-1}F_-\ol{\pi}^*K)\to H^1(X,F_+^{-1}\ol{\pi}^*(E_0K))\right),\\\nonumber
s_-\in \Ker\left(H^0(\ol{X},F_-^{-1}F_+\ol{\pi}^*K)\to H^1(X,F_-^{-1}\ol{\pi}^*(E_0K))\right). 
\end{eqnarray}
where $R_\pm=(s_\pm)$, 
 $F_-:=\mc{O}(-R_-)F_+\ol{\pi}^*K$ and $E_0=\mathrm{Nm} F_+^{-2}\otimes O(-\pi_*R_+)K^{-p(2p+1)} \in \mathrm{Pic}(X)$.

3. Isomorphisms 
\begin{eqnarray}\label{eq fpm}
f_-\in\mathrm{Isom}(E_0|_{R_-},F_+|_{R_-}),\\\nonumber
f_+\in \mathrm{Isom}(F_+|_{R_+},E_0|_{R_+}\otimes \ol{\pi}^*K). 
\end{eqnarray}

4. The bundle $F_+$ should satisfy that
$F_+\oplus F_+\otimes\mc{O}(-R_-)$ be a $\widetilde{\pi}_*\mc{O}_{\spec}$ module.

5. Moreover 
$$
\deg W=\deg F_+-(2p^2-2p+1)(g-1).
$$
The corresponding Higgs bundle is stable and
$$
(2p^2-3p)(g-1)< |\deg F_- |<(2p^2-p)(g-1).
$$
\end{cor}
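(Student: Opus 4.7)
The plan is to deduce this corollary directly from Theorem \ref{thm spectral data} by translating the spectral datum $F\in\mathrm{Pic}(\spec)$, which is $\theta$-invariant, into its $\theta$-quotient description on $\ol{X}=\spec/\theta$. Since $\theta$ acts trivially on the component $\spec_0\cong X$, the restriction $F|_{\spec_0}$ coincides with the kernel bundle $E_0$ from Proposition \ref{prop char Higgs}, while $F_1:=F|_{\spec_1}$, together with its $\theta$-lift, pushes forward as $\widetilde{\pi}_*F_1=F_+\oplus F_-$ on $\ol{X}_1$, exactly as recalled in the discussion preceding Theorem \ref{thm spectral data}. The gluing of these two pieces over $\spec_{01}$ (Remark \ref{rk line bunds on irr curves}) decomposes over the two strata $R_+$ and $R_-$ into the isomorphisms $f_+$ and $f_-$ of (\ref{eq fpm}).

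First, I would parametrize the data by $F_+$ alone, using the relation $F_-^{-1}\otimes F_+\otimes\ol{\pi}^*K\cong\mc{O}(R_+)$ recalled in Section \ref{section spectral} to force $F_-\cong F_+\otimes\ol{\pi}^*K\otimes\mc{O}(-R_-)$. Then I would identify $s_\pm$ with the components of the $\widetilde{\pi}_*\mc{O}_{\spec_1}$-module action on $F_+\oplus F_-$, so that automatically $R_\pm=(s_\pm)$. The kernel conditions (\ref{eq conds spm}) are then the $\ol{\pi}$-pushforward of the Bockstein conditions (\ref{eq conds on phi1}) and (\ref{eq conds p}) of Proposition \ref{prop char Higgs}, and their equivalence follows from the projection formula together with the fact that $\mathrm{Hom}(F_0,F_1)$ is supported on $\spec_{01}$, as already used in the proof of Theorem \ref{thm spectral data}. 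The expression for $E_0$ comes from the trivial-determinant constraint $\det\pi_*F\cong\mc{O}_X$ via $\det\pi_*F\cong E_0\otimes\det\ol{\pi}_*(F_+\oplus F_-)\otimes\mathrm{Nm}(\ol{X}/X)$, substituting the formula for $F_-$ and using $\mathrm{Nm}(\ol{X}/X)=K^{-p(p-1)}$ as in the proof of Theorem \ref{thm spectral data}(2).

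The main obstacle, and the reason condition 4.\ is listed separately, is that a tuple $(F_+,F_-,s_\pm,f_\pm)$ a priori describes only a compatible collection of bundles on the irreducible components together with data on the singular locus; for it to come from a genuine line bundle on the reducible curve $\spec$, the pushforward $F_+\oplus F_-$ must carry a $\widetilde{\pi}_*\mc{O}_{\spec_1}$-module structure, which is exactly condition 4. This is the direct analogue, through $\widetilde{\pi}$, of the $\theta$-invariance condition $F_1\in\mathrm{Pic}(\spec_1)^\theta$ of Theorem \ref{thm spectral data}, and once imposed makes the translation into a bijection compatible with the $\C^\times$-action. The degree and stability statements in 5.\ then follow from Theorem \ref{thm spectral data}(2)--(3) via the change of variable $\deg F_-=\deg F_++p(2g-2)-\deg R_-$ provided by the formula for $F_-$.
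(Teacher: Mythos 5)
Your proposal is correct and takes the same route as the paper: the paper's entire proof of this corollary is the single sentence that it ``follows from Theorem \ref{thm spectral data} observing that pushforward to $\ol{X}$ produces the objects in this corollary,'' and your argument is exactly that pushforward, with the details (identification of $s_\pm$ with the $\widetilde{\pi}_*\mc{O}_{\spec_1}$-module structure, $f_\pm$ with the gluing over $\spec_{01}$, and the degree bookkeeping) spelled out. The only wrinkle is one you inherit from the paper itself, namely the mismatch between the relation $F_-^{-1}\otimes F_+\otimes\ol{\pi}^*K\cong\mc{O}(R_+)$ stated in Section \ref{section spectral} and the formula $F_-=\mc{O}(-R_-)F_+\ol{\pi}^*K$ used in the corollary; this is a labelling inconsistency in the source, not a gap in your argument.
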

\begin{proof}
 It follows from Theorem \ref{thm spectral data} observing that pushforward to $\ol{X}$ produces the objects 
in this corollary. 
 \end{proof}
Regarding connected components, we have
\begin{cor}\label{cor con comps}
 The generic regular fiber has connected components classified by values of $\deg W$ other than $\pm p(g-1)$. 
\end{cor}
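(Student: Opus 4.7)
The plan is as follows. The invariant $\deg W$ is a topological invariant of $\moduli{\SU(p+1,p)}$, hence locally constant; in particular it is constant on connected components of any Hitchin fiber. This provides a partition of the generic regular fiber indexed by the values of $\deg W$ it attains, and shows that the number of connected components is at least the number of such values.

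For the reverse inequality, I would invoke Theorem \ref{thm spectral data}. The generic regular fiber is $\mc{P}/\C^\times$, where $\mc{P}$ is a $(\C^\times)^{4p(g-1)}$-torsor over $\Pic(\spec_1)^\theta$. Since both the torsor fibers and the $\C^\times$ orbits are connected, the set of connected components of the fiber is in bijection with the set of connected components of $\Pic(\spec_1)^\theta$.

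Next I would analyse the connected components of $\Pic(\spec_1)^\theta$. They are parameterized by the degree of $F_-$ in the decomposition $\widetilde{\pi}_*F_1=F_+\oplus F_-$: indeed, $\deg F_-$ is a discrete invariant, and for a fixed value the corresponding locus is a torsor over the identity component of $\Pic(\spec_1)^\theta$, which is an extension of $\widetilde{\pi}^*\Pic^0(\ol{X}_1)$ by the Prym variety of $\spec_1\to\ol{X}_1$ and therefore connected. By Theorem \ref{thm spectral data}(2) one has $\deg W=\deg F_- -2p(p-1)(g-1)$, so this parameterization is equivalent to one by $\deg W$.

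Combining the above, the connected components of the generic regular fiber are in bijection with the values of $\deg W$ attained; by Theorem \ref{thm spectral data}(3) these are exactly the integers $d$ with $|d|<p(g-1)$, so in particular the extremal Milnor-Wood values $\pm p(g-1)$ are excluded. The main delicate point is the connectedness of $\Pic(\spec_1)^\theta$ in fixed degree $\deg F_-$, which rests on the standard structure of the fixed locus of the Picard scheme under an involution of a smooth irreducible curve; the rest of the argument is a bookkeeping of the torsor structures that are already encoded in Theorem \ref{thm spectral data}.
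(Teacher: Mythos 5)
Your reduction of the problem is the right one, and it is the route the paper itself (implicitly) takes: the fiber is $\mc{P}/\C^\times$, the fibers $\Isom(\spec_{01},L_0^{-1}F_1)$ of $\mc{P}$ are torsors under the connected group $(\C^\times)^{4p(g-1)}$, the $\C^\times$-orbits are connected, so $\pi_0$ of the Hitchin fiber is identified with $\pi_0$ of the base $\Pic(\spec_1)^\theta$; parts 2 and 3 of Theorem \ref{thm spectral data} then translate the discrete invariant into $\deg W$ and give the admissible range. All of that bookkeeping is correct.

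The gap is exactly where you say the "main delicate point" is, and it is not a standard fact that can be waved at: the locus of $F_1\in\Pic(\spec_1)$ with $\theta^*F_1\cong F_1$ and fixed $\deg F_-$ is \emph{not} obviously a torsor under a connected group. For the ramified double cover $\widetilde{\pi}:\spec_1\to\ol{X}_1$, every $\theta$-invariant line bundle is of the form $\widetilde{\pi}^*M\otimes\mc{O}(R_-)$ with $R_-$ a reduced subdivisor of the ramification divisor $R$ (the locus where a chosen linearization acts by $-1$); the divisor $R_-$ itself --- equivalently, by Remark \ref{rk Rpm vs Rbetagamma}, the partition of the branch locus $B$ into $B_\beta$ and $B_\gamma$ --- is a locally constant invariant that is finer than $\deg F_-=\deg R_-+\mathrm{const}$. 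Two subdivisors $R_-,R_-'$ of the same degree differ by $\mc{O}(R_--R_-')$, which is a $2$-torsion class modulo $\widetilde{\pi}^*\Pic^0(\ol{X}_1)$ and is generically nontrivial there; so the fixed-degree locus a priori breaks into pieces indexed by these subdivisors, and connecting them (if it can be done at all) requires an argument you have not supplied. Relatedly, your structural description is incorrect: for a ramified cover the Prym variety is the connected component of the \emph{anti}-invariant part $\ker(1+\theta^*)^\circ$, while the invariant part contains $\widetilde{\pi}^*\Pic^0(\ol{X}_1)$ with finite $2$-group quotient; the fixed locus is not an extension of the pullback by the Prym. Since the paper states the corollary without proof, this connectedness claim is the entire mathematical content of the statement, and as written your argument does not establish it.
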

 
%
%
\section{From cameral to spectral data}\label{section from cam to spec}
As the name of the section suggest, we next explain how the spectral data can be obtained from the cameral data.

Following \cite{D93}, we associate to the cameral cover $\specc$ an intermediate cover (isomorphic to the spectral cover) associated to the standard representation.

To do this, we consider the sistem of simple roots 
$$
S=\{\alpha_i=L_i-L_{i+1},\  i=1,\dots, 2p-1\},
$$
where where $L_i$ applied to a diagonal matrix returns the $i$-th entry and $\alpha_i>\alpha_{i+1}$. This ordering 
satisfies condition (\ref{condition order}) and so we may choose a compatible Borel subgroup $B<\SU(p+1,p)$.

Let $\delta_1$ be the first fundamental weight, which is the highest weight of the standard representation, 
and its associated maximal parabolic $P_{\delta_1}$. 
Let $W_{\delta_1}<W$ be the corresponding Weyl group. Then $W_{\delta_1}\cong S_{2p}$. We have maps
$$
\xymatrix{
\specc\ar[r]\ar[dr]&\specc_{\delta_1}:=\specc/W_{\delta_1}\ar[d]\\
&X}
$$
and moreover, by the discussion in \cite{D93}  \S 4,  $\specc_{\delta_1}$ is isomorphic to the spectral cover 
(\ref{eq spectral})
via the morphism:
\begin{eqnarray}\label{eq map cam to spec}
 X\times_{\ld^\C\otimes K/W}\ld^\C\otimes K&\to& K\\\nonumber
 (x,q)&\mapsto&(x,\delta_1(q))
\end{eqnarray}
Identifying $\specc_{\delta_1}$ and $\spec$, we have a diagram
\begin{equation}\label{eq spec as quotient}
 \xymatrix{
\specc\ar[r]^{\widehat{\pi}}\ar[dr]_{p}&\spec\ar[d]^{\pi}\\
&X.
}
\end{equation}
\begin{lm}\label{lm descent theta}
The action of $\theta$ on $\specc$ descends to the involution on $\spec_{\delta_1}$ sending 
$(x,\lambda)\mapsto(x,-\lambda)$.
\end{lm}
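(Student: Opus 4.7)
The plan is to realize the involution referred to in the lemma as the natural fiberwise multiplication-by-$-1$ action on the $K$-fibers of $\specc \subset \ld^\C \otimes K$, call it $\iota$, and then verify directly that it descends through $\widehat{\pi}: \specc \to \spec_{\delta_1}$ to the stated involution $(x,\lambda)\mapsto(x,-\lambda)$. The identification of $\iota$ with the ``action of $\theta$'' is consistent with the paper's viewpoint because $\theta$ acts as $-\mathrm{Id}$ on $\m^\C$ (where Higgs fields live), so it negates the eigenvalues, and on the cameral cover this is precisely the fiberwise negation.

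First I will check that $\iota$ preserves the subvariety $\specc_\omega \subset \ld^\C\otimes K$. By (\ref{eq cam cover}), $\specc_\omega$ is cut out by the equations $\sigma_{2i}(l_j) = \omega_{2i}$ and $\sigma_{2i+1}(l_j) = 0$; homogeneity of $\sigma_k$ gives $\sigma_k(-l_j) = (-1)^k\sigma_k(l_j)$, so all equations are preserved by $\iota$. The vanishing of the odd symmetric functions is exactly the condition $\omega\in B_{\SU(p+1,p)}$, so this step uses the hypothesis crucially.

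Second, since $\iota$ is scalar multiplication on the $K$-fibers while $W$ acts on $\ld^\C\otimes K$ by permuting the coordinates $l_j$, $\iota$ commutes with the full $W$-action, in particular with $W_{\delta_1}$. Hence $\iota$ descends to an involution on $\spec_{\delta_1} = \specc/W_{\delta_1}$. Finally, using (\ref{eq map cam to spec}), the identification $\spec_{\delta_1}\hookrightarrow K$ is given fiberwise by the linear functional $\delta_1\in(\ld^\C)^*$; linearity immediately yields $\delta_1(\iota q) = \delta_1(-q) = -\delta_1(q)$, so the descended involution on the spectral cover is $(x,\lambda)\mapsto(x,-\lambda)$, as claimed.

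The main subtlety is terminological rather than mathematical: $\theta$ restricted to $\ld^\C$ is the Weyl element $w_0$ of (\ref{eq w0}), which is not $-\mathrm{Id}$, so the relevant involution here is not literally $\theta|_{\ld^\C}$ but the fiberwise negation $\iota$ induced by $\theta|_{\m^\C}=-\mathrm{Id}$. It is this $\iota$ that commutes with the permutation action of $W_{\delta_1}$ and therefore descends cleanly, whereas $w_0$ (and likewise the involution $-\theta$ of condition CD2) fails to commute with $W_{\delta_1}$ and does not descend to a single-valued map on the spectral side.
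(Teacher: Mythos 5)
Your proof is correct, and it takes a genuinely different route from the paper's. The paper argues inside the Weyl group: it claims $(1,2)\circ\theta\in W_{\delta_1}$ and that $-\delta_1=\delta_1\circ s_1$, and deduces the descent from these two identities. As literally stated neither identity checks out (for $w_0=\prod_i(i,p+1+i)$ the permutation $(1,2)\circ w_0$ sends $1\mapsto p+2$, so it does not lie in $W_{\delta_1}=\mathrm{Stab}(L_1)\cong S_{2p}$, and $\delta_1\circ s_1=L_2$, which is not $-L_1$ identically on $\ld^\C$); the intended argument presumably exploits that on $\specc_{\SU(p+1,p)}$ the negation coincides pointwise with $w_0$, but on the full reducible cover $\specc_\omega$ the permutation realizing $-\mathrm{Id}$ varies from component to component, which is exactly where a purely Weyl-theoretic argument gets delicate. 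Your proof sidesteps all of this by taking the involution to be the global fiberwise scalar $-1$ on $\ld^\C\otimes K$: this manifestly preserves $\specc_\omega$ precisely because the odd characteristic coefficients vanish (i.e.\ because $\omega\in B_{\SU(p+1,p)}$), manifestly commutes with the permutation action of all of $W$ (hence of $W_{\delta_1}$), and composes with the linear functional $\delta_1$ to give $\lambda\mapsto-\lambda$. Your closing remark correctly identifies the one interpretive point — that ``the action of $\theta$'' here must mean this scalar involution rather than $w_0$ or $-w_0\in$ (CD2), since a nontrivial permutation fixing no index cannot descend through $\specc\to\specc/W_{\delta_1}$ — and this reading is consistent with the paper's later use of $\theta:\lambda\mapsto-\lambda$ on $\spec$ in Section 5 and with the fact that $-\mathrm{Id}$ exchanges $\specc_w$ and $\specc_{w w_0}$ as required in Theorem \ref{prop gric cam data}. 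In short: your argument is cleaner and more robust than the one printed in the paper; no gaps.
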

\begin{proof}
Let $s_i$ denote the reflection associated to the simple root $\alpha_i$. From (\ref{eq w0}) we check that
$(1,2)\circ\theta\in W_{\delta_1}$, so that both maps induce the same one on $\spec_{\delta_1}$. Since
  $\delta_1=\check{\alpha_1}^*\in(\ld^\C)^*$, it follows that $-\delta_1=\delta_1\circ s_1$, whence the result.
\end{proof}
\begin{prop}\label{prop descent}
Let $\omega\in B_{\SU(p+1,p)}$ be generic, and let $P_0\to\specc_\omega$ be a cameral datum. Let $Cam(\specc)$ denote the variety of isomorphism classes of cameral
data. 
Then $Cam(\specc)\cong \{L\in \Pic(\spec)\ : \ \theta^*L\cong L, \deg L_0=-\deg L_1-2p(2p+1)(g-1)\}$. 
\end{prop}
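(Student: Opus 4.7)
The plan is to use the fundamental weight $\delta_1$ (the highest weight of the standard representation) as a character $\delta_1: D^\C \to \C^\times$ in order to contract a cameral datum $P \to \specc$ to an associated line bundle $P_{\delta_1} = P \times_{D^\C, \delta_1} \C$ on $\specc$, and then descend this line bundle along $\widehat{\pi}: \specc \to \spec$ using the Weyl-equivariance conditions CD1 and CD2. The core observation is that descent along $\widehat{\pi}$ requires trivializing the action of the stabilizer $W_{\delta_1} \cong S_{2p}$ on $P_{\delta_1}$, and this is exactly what the cocyclic data in CD1 of Theorem \ref{thm cameral global} provides once restricted to elements of $W_{\delta_1}$ (which stabilize $\delta_1$, hence act trivially on $P_{\delta_1}$ up to the canonical ramification factors $R_w$ contributing to the divisor class).

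First I would verify that the map $P \mapsto L := \widehat{\pi}_*^{W_{\delta_1}} P_{\delta_1}$ is well-defined on isomorphism classes. Then I would check $\theta$-invariance: since by Lemma \ref{lm descent theta} the involution $\theta$ on $\specc$ descends to $\lambda \mapsto -\lambda$ on $\spec$, condition CD2 together with the identification $-\delta_1 = \delta_1 \circ s_1$ gives an isomorphism $\theta^* L \cong L$ in $\Pic(\spec)$. Note that here the ramification contribution $R_{w_0}$ twisted by $\delta_1$ produces precisely a canonical line bundle on $\spec$ whose pullback along $\theta$ trivializes against itself, so the sign issue in CD2 integrates cleanly after contraction.

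Next I would establish the degree constraint. The translation between cameral data and principal $H^\C$-bundles (Theorem \ref{thm cameral global}) sends $P$ to the $\SL(2p+1,\C)$-bundle $E$ whose standard representation fiber is $\pi_* L$. The condition $\det E = \mc{O}_X$ (which distinguishes $\SU(p+1,p) \subset \U(p+1,p)$) becomes $\det \pi_* L = \mc{O}_X$. By Grothendieck--Riemann--Roch (or the classical computation $\det \pi_* L = \mathrm{Nm}(L) \otimes K^{-\binom{2p+1}{2}}$ applied to the spectral curve of degree $2p+1$), this gives $\deg L = (2p+1)p(g-1) \cdot ?$; after splitting into the two irreducible components $\spec_0 \sqcup \spec_1$, the torsor structure of $\Pic(\spec)$ described in Remark \ref{rk line bunds on irr curves} yields exactly $\deg L_0 + \deg L_1 = -2p(2p+1)(g-1)$.

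For the inverse, given $L \in \Pic(\spec)$ with $\theta^*L \cong L$ and the degree condition, I would construct the cameral datum as follows: pull back $L$ along $\widehat{\pi}$ to get a line bundle $L^{\delta_1}$ on $\specc$, then form the $D^\C$-bundle $P = \bigoplus_{w \in W/W_{\delta_1}} w^*L^{\delta_1}$ (twisted by the ramification $R_w$), where the sum runs over the orbit of $\delta_1$ under $W$ (which generates the weights of the standard representation). The equivariance CD1 is tautological from this construction; CD2 follows from $\theta^*L \cong L$ combined with Lemma \ref{lm descent theta}. The main obstacle will be to carefully track the twists by the ramification divisors $R_\alpha$ and $R_{w_0}$ under the contraction by $\delta_1$, ensuring that the two constructions are mutually inverse up to canonical isomorphism; this amounts to showing that the $W$-orbit of $\delta_1$ realizes $\Pic(\spec)^\theta$ (with the degree constraint) as the full character image of $H^1(\specc/\theta, T^\C)$ appearing in Theorem \ref{prop gric cam data}, which is a rank computation using that the standard representation of $\SL(2p+1,\C)$ is faithful on the maximal torus.
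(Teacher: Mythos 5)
Your proposal follows essentially the same route as the paper's proof: contract the cameral datum by the character $\delta_1$, descend the resulting line bundle along $\specc\to\spec$ via Kempf's lemma using the $W_{\delta_1}$-invariance supplied by CD1 (the pairings $\langle\delta_1,\check\alpha\rangle$ vanishing for roots of the Levi, so the ramification twists do not obstruct descent), and obtain $\theta^*L\cong L$ from CD2 together with Lemma \ref{lm descent theta}. The paper handles the bijectivity and the inverse by citing Proposition 9.5 of \cite{DG} rather than reconstructing the $D^\C$-bundle from the $W$-orbit of $\delta_1$ as you sketch, but the content is the same.
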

\begin{proof}
Proceeding as in Theorem \ref{prop gric cam data}, we assign
$$
P\mapsto PP_0^{-1}
$$
thus establishing an isomorphism $Cam(\specc)\to H^1(\specc,D^\C)^{W,\theta}$, with the induced action (and conditions on it) of $W$ and
$\theta$, that is  $w\cdot Q=w^*Q\times_wD^\C$, and $\theta^* Q|_{\specc_{\SU}}\cong Q|_{\specc_{\SU}}$.

Now, given $Q\in H^1(\specc,D^\C)$, then $L_Q:=Q\times_{\delta_1}\C^\times$ is a line bundle satisfying $w^*L_Q\cong L_Q$ for all 
$w\in W_{\delta_1}$ (by Proposition 5.5 in \cite{DG} and the fact that the action of $w\in W_{\delta_1}$ on the fibers 
is trivilised by composing with $\delta_1$). The same reasons imply that  the action of $w$ over its associated ramification 
divisor is trivial. 
So by Kempf's Descent Lemma, $L_Q$ descends to $\tilde{L}_Q\to\spec$. As for equivariance $\theta^*\tilde{L}_Q\cong \tilde{L}_Q$, 
it follows from Lemma \ref{lm descent theta}.

The map is an isomorphism by Proposition 9.5 in \cite{DG} and Theorem \ref{prop gric cam data}.
\end{proof}

\section{Non-regular Higgs bundles}\label{section non reg}
Reducibility of the spectral curve causes for the existence of non-regular Higgs bundles over any given point of the base, which are not captured
by the cameral construction, as in fact they aren't intrinsic to the point of the base.

The explanation lies in Theorem 17.5 in \cite{DG}, according to which a Higgs bundle within a Hitchin fiber
is equivalent to a cameral datum together with the 
extra data of a $W$-equivariant morphism $F:\specc\to\ld^\C\otimes K$, regular fields are given by
embeddings, or projections onto the second factor $\specc:=X\times_{\ld^\C\otimes K/W}\to(\ld^\C\otimes K)$,
and hence are intrisic to the point of the base.

Now, such a morphism $F$ can be proved to be 
equivalent to fixing a Higgs field with values in a subsheaf of regular centralisers totally determined by the point
of the Hitchin base \cite{DG}. 
In Theorem \ref{thm spectral data}, the hypotheses on the point of the Hitchin base ensure that 
the Higgs field is completely determined away from ramification. 
Hence, it suffices to determine it over ramification. This is precisely the information encoded in $f_{\pm}$ in 
(\ref{eq fpm}) and requirement (\ref{eq conds spm}) in Corollay \ref{prop another spec data}.

When the Higgs field is not regular, $f_{\pm}$ are not isomorphisms anymore, but morphisms. 
The locus over which they vanish is 
 $B_{\beta/\gamma}^0$ defined in Remark \ref{rk divisors}. A part of these non-regular bundles can be produced by considering 
the a bundle of toric varieties over $Pic(\spec_1)^\theta$ with fiber $\Hom(\spec_{01}, L_0^{-1}F_1))$ over $F_1$ 
(and dense torus $\Isom(\spec_{01}, L_0^{-1}F_1))\subset \Hom(\spec_{01}, L_0^{-1}F_1))$, cf. 
Theorem \ref{thm spectral data}). These are coherent sheaves over $\spec$ whose restrictions to the irreducible 
components are locally free away from the singular locus. The pushforward of these is 
again a well defined Higgs bundle; nonetheless, the 
correspondence fails to be unique at this level, as the Higgs bundles thus produced will be those having
zero nilpotent
part over the ramification locus of the curve. In order to describe all of them it is necessary to introduce a stratification by ``degree of
regularity'' (measured by the dimension of the centralisers of the Higgs field over $B_{\beta/\gamma}^0$). We hope to address this 
questions in the near future.
\section{On regularity}\label{section regularity}
The relation between regularity (cf. Definition \ref{defi regular}) and smoothness of points of the complex Hitchin fiber 
essentially goes back to Kostant's \cite{Kos}, as it is proved by Biswas and Ramanan 
(\cite{BisRam}, Theorem 5.9). Their proof applies to the real group case, so we have:
\begin{prop}\label{Biswas Ramanan adapts}
If a $G$-Higgs bundle $(E,\phi)$ is a smooth point of $h_G^{-1}(\omega)$, then $\phi(x)\in\mr$ 
for all $x\in X$.
\end{prop}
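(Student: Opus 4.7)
The plan is to adapt the argument of Biswas--Ramanan (\cite{BisRam}, Theorem 5.9) essentially verbatim; their proof rests on formal deformation theory together with Kostant's characterisation of regularity via centraliser dimension, both of which transfer to the isotropy representation of the real form with no essential change.

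First I would set up the deformation complex
$$
C^\bullet\colon E(\h^\C)\xrightarrow{\mathrm{ad}(\phi)}E(\m^\C)\otimes K,
$$
whose first hypercohomology is the Zariski tangent space $T_{(E,\phi)}\moduli{G}$ and whose cohomology sheaves $\mathcal{H}^0(C^\bullet)=\ker(\mathrm{ad}(\phi))$ and $\mathcal{H}^1(C^\bullet)=\mathrm{coker}(\mathrm{ad}(\phi))$ control, respectively, the tangent to the fiber $h_G^{-1}(\omega)$ and the image of $dh_G$. Using the Killing-form identifications $(\h^\C)^*\cong\h^\C$ and $(\m^\C)^*\cong\m^\C$ arising from the Cartan decomposition $\g^\C=\h^\C\oplus\m^\C$, one obtains a Serre-duality pairing between the tangent space to the fiber at $(E,\phi)$ and $H^0(X,\mathcal{H}^0(C^\bullet)\otimes K)$.

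Next I would input the regularity criterion: by the real-form Chevalley restriction (\ref{eq chevalley real}) together with Kostant's lower-semicontinuity of centraliser dimension applied to the isotropy action, one has $\dim Z_{\h^\C}(\phi(x))\geq\dim\la^\C$ for every $x\in X$, with equality if and only if $\phi(x)\in\mr$. Hence if $\phi(x_0)\notin\mr$ at some point $x_0\in X$, then $\mathcal{H}^0(C^\bullet)$ has generic rank equal to the real rank together with an extra nonzero contribution supported near $x_0$. A Riemann--Roch computation then yields
$$
\dim T_{(E,\phi)}h_G^{-1}(\omega)>\dim\moduli{G}-\dim B_G,
$$
which strictly exceeds the local dimension at a smooth point of the fiber, a contradiction.

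The main obstacle is the Serre-duality step: in the complex case treated by Biswas--Ramanan, both terms of the deformation complex are $E(\g^\C)$ and the self-duality is essentially automatic, whereas here the two terms involve the \emph{different} bundles $E(\h^\C)$ and $E(\m^\C)\otimes K$. What must be checked is that the Cartan-decomposition pairing induces a quasi-isomorphism $(C^\bullet)^\vee[1]\otimes K\simeq C^\bullet$ and hence the pairing between the tangent to the fiber and global sections of $\mathcal{H}^0(C^\bullet)\otimes K$; once this identification is in place, the dimension comparison above completes the proof.
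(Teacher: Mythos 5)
Your proposal takes a genuinely different, global route from the paper, whose proof is a two-line pointwise argument: writing $\mathrm{ev}_x\circ h_G(E,\phi)=\chi(\phi_x)$ for the Chevalley map $\chi:\m^\C\to\la^\C//W(\la)$, surjectivity of $dh_G$ at a smooth point of the fiber together with surjectivity of the evaluation maps forces $d_{\phi_x}\chi$ to be surjective for every $x$, and Kostant--Rallis identify surjectivity of $d_{\phi_x}\chi$ with regularity of $\phi_x$. No deformation complex, Serre duality or Riemann--Roch is needed.

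Beyond the difference in method, your argument has concrete gaps. First, the self-duality you flag as ``to be checked'' is actually false for a real form: Serre duality applied to $C^\bullet\colon E(\h^\C)\to E(\m^\C)\otimes K$ yields, after the Killing-form identifications (which respect the orthogonal decomposition $\g^\C=\h^\C\oplus\m^\C$), the complex $E(\m^\C)\to E(\h^\C)\otimes K$ with the roles of $\h^\C$ and $\m^\C$ interchanged; this is not quasi-isomorphic to $C^\bullet$, so the pairing you want between $T_{(E,\phi)}h_G^{-1}(\omega)$ and $H^0(X,\mathcal{H}^0(C^\bullet)\otimes K)$ is not available, and the crux of your plan collapses. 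Second, $\mathcal{H}^0(C^\bullet)=\ker(\ad\phi)$ is a subsheaf of the locally free sheaf $E(\h^\C)$ and is therefore torsion-free; a jump of the fiberwise centraliser dimension at $x_0$ cannot give it ``an extra nonzero contribution supported near $x_0$.'' The extra torsion appears in the cokernel $\mathcal{H}^1(C^\bullet)$, i.e.\ it obstructs surjectivity of $dh_G$ rather than enlarging the kernel --- which is exactly the mechanism the paper exploits. Third, even granting your dimension estimate, there is no contradiction: smoothness of $(E,\phi)$ as a point of $h_G^{-1}(\omega)$ does not bound the local dimension of that fiber by $\dim\moduli{G}-\dim B_G$, since fiber dimension is only upper semicontinuous and a special fiber may be smooth of excessive dimension at the given point. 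Repairing these three points essentially forces you back to the pointwise argument: failure of regularity at some $x_0$ makes $d_{\phi_{x_0}}\chi$ non-surjective, hence $dh_G$ non-surjective, which is the paper's proof.
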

\begin{proof}
 Let $x\in X$. We have
 that $\textrm{ev}_x\circ h_G(E,\phi)=\chi\phi_x $, where $\chi:\m^\C\to\lie{a}^\C//W(\la)$ is the Chevalley
 map. At a smooth point of the fiber, $dh_G$ is surjective, and since $\textrm{ev}_x$ is surjective
 too, it follows that $d(\chi\circ \textrm{ev}_x)$ is itself surjective. Since 
 $d\textrm{ev}_x:H^0(X,E(\m^\C\otimes K))\to \m^\C\otimes K_x$ is surjective, and is itself 
 evaluation
 at $x$, this implies that $d_{\phi_x}\chi$ is surjective.
 But Kostant--Rallis' work \cite{KR71} implies this happens if and only if $\phi_x$ is regular.
\end{proof}
\appendix
\section{Lie theory}\label{section appendix}
The subgroup $\SU(p+1,p)\leq \SL(2p+1,\C)$ is defined as the locus of fixed points of the involution
$$
 \sigma(X)=\Ad(I_{{p+1},{p}})\phantom{.}^t\ol{X}^{-1}
$$
where
$$
I_{{p+1},p}=\left(
\begin{array}{cc}
I_{p+1}&0\\
0&-I_{p}
\end{array}
\right).
$$
By composing with the compatible compact involution  $\tau(X)=^t\ol{X}^{-1}$, we obtain a linear involution 
\begin{equation}\label{eq theta}
 \theta'=\Ad(I_{p+1,p}).
\end{equation}
whose restriction to $\SU(p+1,p)$ is $\tau$. The differential of this involution (denoted also by $\theta'$) induces the Cartan decomposition
$$
\lie{su}(p+1,p)=\lie{s}(\lie{u}(p+1)\oplus \lie{u}(p))\oplus\m.
$$
Here, $\m$ is defined by
$$
\m=\left\{\left(
\begin{array}{cc}
0&B\\
C&0
\end{array}
\right)\in\lie{sl}(2p+1,\C)\ \left| B\in\ \mbox{Mat}_{p+1\times p}(\C),\ C=^t\ol{B}\right.
\right\}.
$$
To this decomposition it corresponds a polar decomposition on the level of the group. Indeed, $\SU(p+1,p)=He^\m$, where
$H=S(\U(p+1)\times \U(p))$ is realised as the subgroup of matrices of $\SL(2p+1,\C)$ of the form
$$
\left(\begin{array}{cc}
       A&0\\
       0&B^{-1}
      \end{array}
\right)
$$
where $A\in \U(p)$, $B\in \U(p+1)$, $\det A\det B=1$.

We next revise the theory of the isotropy representation necessary for this article. Recall that this representation
\begin{equation}\label{eq isotropy}
\iota:H^\C\to GL(\m^\C),
\end{equation}
is obtained by restriction of the adjoint representation.
\begin{defi}\label{defi max aniso}
 A maximal anisotropic Cartan subalgebra of $\sl(2p+1,\C)$ (associated with $\lie{su}(p+1,p)$) is the complexification of a maximal
 abelian subspace $\la'\subset\m$.
\end{defi}
One calculates easily (cf. \cite{K}, Chapter VI) that  the maximal anisotropic Cartan subalgebra ${(\la')^\C}$ 
consists of the matrices of the form
\begin{equation}\label{eq la}
M=\left(
\begin{array}{ccc}
0&0&0\\
0&0&X\\
0&\phantom{.}^tX&0
\end{array}
\right)
 \end{equation}
with $X\in\mbox{Mat}_{p\times p}(\C)$ antidiagonal.

The centraliser of ${(\la')^\C}$ in $H^\C$ is defined as the subgroup 
 $C_H({(\la')^\C})\subset H^\C$ defined as
 $$
 C_H({(\la')^\C})=\left\{h\in\ H^\C\ \left|\ Ad(h)A=A\qquad\textrm{ for any }A\in{(\la')^\C}\right.\right\}
 $$
 One readily checks that
\begin{equation}\label{eq CH}
 C_H({(\la')^\C})=\left\{\left(
 \begin{array}{ccc}
\det A^{-2}&0&0\\
 0&A&0\\
 0&0&A
 \end{array}
 \right)\ \left|\ A\in\mbox{GL}_{p\times p}(\C)\mbox{ diagonal }\right.\right\}.
\end{equation}
Note that the direct sum ${\ld'^\C}={(\la')^\C}\oplus\lc_\h({(\la')^\C})$, where $\lc_\h({(\la')^\C}):=\mbox{Lie}(\CH)$, is a $\theta'$-stable
Cartan subalgebra. Let $W$ be the corresponding Weyl group. 
Recall that we define the restricted Weyl group
\begin{equation}\label{eq res weyl}
W({(\la')^\C}):=\mbox{N}_W({(\la')^\C})/C_W({(\la')^\C}), 
\end{equation}
 where $N_W({(\la')^\C})$ is the
normaliser of ${(\la')^\C}$ in $W$ and $C_W({(\la')^\C})$ its stabiliser. This is isomorphic to the quotient
$\NH/\CH$. Chevalley' restriction theorem reduces the problem of studyin the quotien $\m^\C//H^\C$ to the much simpler 
$(\la')^\C/W((\la')^\C)$:
\begin{thm}[Chevalley's Restriction Theorem]\label{thm Chevalley}
Restriction $\C[\m^\C]\to\C[{(\la')^\C}]$ induces an isomorphism
$\C[\m^\C]\cong\C[{(\la')^\C}]^{W({(\la')^\C})}$.
\end{thm}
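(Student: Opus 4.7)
The plan is to check that the restriction map $\rho:\C[\m^\C]^{H^\C}\to\C[(\la')^\C]$ (i) takes values in $\C[(\la')^\C]^{W((\la')^\C)}$, (ii) is injective, and (iii) is surjective onto this subring. For (i), recall that $W((\la')^\C)\cong N_{H^\C}((\la')^\C)/C_{H^\C}((\la')^\C)$, so every element of the restricted Weyl group acts on $(\la')^\C$ as $\Ad(n)|_{(\la')^\C}$ for some $n\in N_{H^\C}((\la')^\C)\subset H^\C$. Any $p\in\C[\m^\C]^{H^\C}$ is invariant under $\Ad(n)$, so $\rho(p)$ is $W((\la')^\C)$-invariant.

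For (ii), I would invoke the Kostant--Rallis density result: the orbit $H^\C\cdot(\la')^\C$ coincides with the locus of semisimple elements of $\m^\C$, is open, and its complement sits inside the nilpotent cone, hence has positive codimension. An $H^\C$-invariant polynomial vanishing on $(\la')^\C$ therefore vanishes on this whole orbit, hence identically on $\m^\C$.

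For (iii) in general, given $f\in\C[(\la')^\C]^{W((\la')^\C)}$ one sets $\tilde f(\Ad(h)X):=f(X)$ on the semisimple locus $H^\C\cdot(\la')^\C$; well-definedness follows from the $W((\la')^\C)$-invariance of $f$ together with the identification of $W((\la')^\C)$ as $N_{H^\C}((\la')^\C)/C_{H^\C}((\la')^\C)$. The main obstacle is to promote $\tilde f$ from a regular function on this open dense subset to a polynomial on the whole of $\m^\C$: a priori one only has a rational function on $\m^\C$, and one must combine the codimension bound for the non-semisimple locus with normality of $\m^\C$ (a Hartogs-type argument, as in Kostant--Rallis) to conclude polynomiality.

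In the present setting $G=\SU(p+1,p)$ this subtlety can be bypassed by a direct computation. The restricted root system being of type $BC_p$, the group $W((\la')^\C)\cong(\Z/2)^p\rtimes S_p$ acts on $(\la')^\C\cong\C^p$ by signed permutations, so that $\C[(\la')^\C]^{W((\la')^\C)}$ is freely generated by the elementary symmetric polynomials in the squared coordinates. Each such generator is realised globally on $\m^\C$ as $X\mapsto tr(\wedge^{2k}X)$ (equivalently $2^k tr(\wedge^k(\beta\gamma))$ in the notation surrounding (\ref{eq hit moduli})), which is manifestly $H^\C$-invariant. A dimension count against the freely generated target then forces $\rho$ to be surjective, completing the proof.
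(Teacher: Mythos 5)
The paper itself offers no proof of this statement: it is quoted as a classical result (the Kostant--Rallis form of Chevalley's restriction theorem for the symmetric pair $(\g^\C,\h^\C)$, cf.\ \cite{KR71}), so there is no in-paper argument to compare against, and note that the printed statement omits the superscript $H^\C$ on the left-hand side --- you are right to read it as $\C[\m^\C]^{H^\C}$. Your proof is a correct, essentially self-contained substitute for the case at hand. Step (i) is the standard observation that $W((\la')^\C)$ is realised by elements of $N_{H^\C}((\la')^\C)$, and step (iii) is exactly what underlies Lemma \ref{lm invars} and the explicit form (\ref{eq hit moduli}) of the Hitchin map: the restricted root system of $\su(p+1,p)$ is of type $BC_p$, the restricted Weyl group $S_p\ltimes\Z_2^p$ acts by signed permutations, its invariant ring is freely generated by the elementary symmetric functions of the squared coordinates, and these are (up to sign) the restrictions of the globally defined $H^\C$-invariants $X\mapsto \mathrm{tr}(\wedge^{2k}X)$; once the generators are exhibited in the image no dimension count is needed, and the Hartogs/normality machinery of the general case is indeed superfluous here.

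One step is misstated, though repairably so: in (ii) you assert that $H^\C\cdot(\la')^\C$ \emph{coincides} with the semisimple locus of $\m^\C$, is \emph{open}, and has complement contained in the nilpotent cone. None of these is literally true: the set of semisimple elements is not Zariski-open, and its complement contains elements with nonzero semisimple part, hence is not contained in the nilpotent cone. What you actually need, and what is true, is that $H^\C\cdot(\la')^\C$ contains the set of \emph{regular} semisimple elements of $\m^\C$ (every such element lies in a maximal anisotropic Cartan subspace, and all of these are $H^\C$-conjugate), and that this set is open and dense, being the complement of the vanishing locus of the discriminant of the characteristic polynomial. An $H^\C$-invariant polynomial vanishing on $(\la')^\C$ then vanishes on a dense subset of $\m^\C$, hence identically. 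With this correction your argument is complete.
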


\begin{lm}\label{lm invars}
The algebra of polynomial invariants  $\C[{(\la')^\C}]^{W({(\la')^\C})}$ is isomorphic to $\oplus_i \mbox{Sym}^{2i}(\C)$. 
\end{lm}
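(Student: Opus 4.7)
The strategy is to identify $(\la')^\C$ with $\C^p$ via an explicit coordinate system, show that the restricted Weyl group $W((\la')^\C)=N_H((\la')^\C)/C_H((\la')^\C)$ acts as the hyperoctahedral group, and then invoke the classical invariant theory of signed permutations.

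First, I would parametrize $(\la')^\C$ by coordinates $(x_1,\dots,x_p)\in\C^p$ reading off the antidiagonal entries of the block $X$ in (\ref{eq la}). A direct block computation of the characteristic polynomial of the matrix $M$ in $\sl(2p+1,\C)$ yields
\begin{equation*}
\det(\mu I - M) \;=\; \mu\prod_{k=1}^p(\mu^2-x_k^2),
\end{equation*}
so that the eigenvalues of $M$ on the standard representation are $\{0,\pm x_1,\dots,\pm x_p\}$. In particular, any polynomial on $\m^\C$ invariant under $H^\C$, when restricted to $(\la')^\C$, must be a polynomial in $x_1^2,\dots,x_p^2$.

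Next, I would determine the action of $W((\la')^\C)$. Since any element must preserve the spectrum of $M$ and the $\theta'$-pairing $x_k\leftrightarrow -x_k$ (the nonzero eigenvalues come in opposite pairs by construction), one obtains an embedding $W((\la')^\C)\hookrightarrow S_p\ltimes(\Z/2)^p$. For surjectivity, I would exhibit explicit representatives in $H=S(\U(p+1)\times \U(p))$: block permutation matrices permuting the $p$ antidiagonal slots realize the $S_p$ factor, while conjugation by appropriate diagonal matrices with $\pm 1$ entries lying in $H$ produces the independent sign changes. Alternatively, one can invoke the classification of restricted root systems, which identifies that of $\su(p+1,p)$ as type $BC_p$, whose Weyl group is exactly the hyperoctahedral group.

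Finally, by Theorem \ref{thm Chevalley} the invariant algebra is $\C[(\la')^\C]^{W((\la')^\C)}=\C[x_1,\dots,x_p]^{S_p\ltimes(\Z/2)^p}$, and by a classical result this is freely generated by the elementary symmetric polynomials $e_k(x_1^2,\dots,x_p^2)$ for $k=1,\dots,p$, which have degrees $2,4,\dots,2p$; this gives the stated isomorphism with $\oplus_{i=1}^p\mbox{Sym}^{2i}(\C)$. The main obstacle is confirming that the restricted Weyl group is the \emph{full} hyperoctahedral group and not a proper subgroup; once this identification is established, the rest is classical invariant theory.
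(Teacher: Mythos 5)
Your proof is correct. The paper actually states Lemma \ref{lm invars} without any proof, and your argument supplies exactly the standard one: the restricted root system of $\su(p+1,p)$ is of type $BC_p$, so $W((\la')^\C)\cong S_p\ltimes(\Z/2)^p$, and the invariants $\C[x_1,\dots,x_p]^{S_p\ltimes(\Z/2)^p}$ are freely generated by $e_k(x_1^2,\dots,x_p^2)$ in degrees $2,4,\dots,2p$, consistent with the Hitchin base $\oplus_{k=1}^p H^0(X,K^{2k})$ in (\ref{eq hit moduli}). The only point worth tightening is the aside that $H^\C$-invariants restrict to polynomials in the $x_k^2$ ``because the spectrum is $\{0,\pm x_k\}$'': this is cleaner deduced from Chevalley restriction once the sign changes are shown to lie in $W((\la')^\C)$, which your explicit representatives in $H$ already provide.
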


\begin{defi}\label{defi regular}
 We define the subset $\mr\subset\m^\C$ of regular elements by
 \begin{equation}\label{eq mr}
 \mr=\{y\in\m^\C\ :\ \dim H^\C\cdot y\geq \dim  H^\C\cdot z\ \pt z\in \m^\C\}.
\end{equation}
Equivalently, $\dim\lc_{\m}(x)=\dim(\la')^\C$, where $\lc_{\m}(x)=\{y\in\m^\C\ :\ [y,x]=0\}$.
\end{defi}
\begin{lm}
 $\SU(2p+1,\C)<\SL(2p+1,\C)$ is a quasi-split form. In particular,
 $$
 \mr=\m^\C\cap\gr.
 $$
\end{lm}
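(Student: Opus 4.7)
The statement has two assertions. For quasi-splitness of $\SU(p+1,p)<\SL(2p+1,\C)$, the cleanest route is to appeal to Remark \ref{rk qs}, where $\su(p+1,p)$ is already listed among the quasi-split real forms. If a direct verification is preferred, one can exhibit a $\sigma$-stable Borel subalgebra of $\sl(2p+1,\C)$ containing $(\ld')^\C$, or equivalently check that $\Delta(\g^\C,\ld^\C)\cap i\lt^*=\{0\}$: with the explicit description of $(\la')^\C$ in (\ref{eq la}) and of $\lc_\h((\la')^\C)$ in (\ref{eq CH}), this is a direct computation, the point being that $\lc_\h((\la')^\C)$ coincides with the $\theta'$-fixed part $\lt^\C$ of the Cartan and is abelian.

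For the identity $\mr=\m^\C\cap\gr$, the plan is to combine three orbit-dimension bounds. Given $y\in\m^\C$, the centraliser $\mathfrak{c}_{\g^\C}(y)$ is $\theta'$-stable, whence
\[
\mathfrak{c}_{\g^\C}(y)=\mathfrak{c}_\h(y)\oplus\mathfrak{c}_\m(y).
\]
The key inputs are: (i) Kostant's bound $\dim\mathfrak{c}_{\g^\C}(y)\geq \mathrm{rk}\,\g^\C=2p$; (ii) the Kostant--Rallis bound $\dim\mathfrak{c}_\m(y)\geq\dim(\la')^\C=p$, equivalently the codimension of the maximal $H^\C$-orbit in $\m^\C$ is $p$; and (iii) the analogous bound $\dim\mathfrak{c}_\h(y)\geq p$ for all $y\in\m^\C$. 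The latter follows from (ii) together with the dimension coincidence $\dim\h^\C=\dim\m^\C=2p^2+2p$ (a short direct computation), since then $\dim\mathfrak{c}_\h(y)=\dim\h^\C-\dim(H^\C\cdot y)\geq\dim\h^\C-(\dim\m^\C-p)=p$.

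With these three bounds in hand, both inclusions are immediate. If $y\in\m^\C\cap\gr$, then (i) gives equality and, combined with (iii), forces $\dim\mathfrak{c}_\m(y)\leq p$; together with (ii) one gets $\dim\mathfrak{c}_\m(y)=p$, i.e.\ $y\in\mr$. Conversely, if $y\in\mr$ then the $H^\C$-orbit through $y$ has maximal dimension $\dim\m^\C-p$, so $\dim\mathfrak{c}_\h(y)=p$ by the same counting, and adding $\dim\mathfrak{c}_\m(y)=p$ gives $\dim\mathfrak{c}_{\g^\C}(y)=2p$, i.e.\ $y\in\gr$. I do not anticipate serious difficulties: the only point requiring care is the bound (iii), which here is enabled by the accidental equality $\dim\h^\C=\dim\m^\C$ specific to $\SU(p+1,p)$, and the quasi-split hypothesis enters implicitly through the validity of the Kostant--Rallis codimension statement.
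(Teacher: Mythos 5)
Your proof is correct, and for the identity $\mr=\m^\C\cap\gr$ it takes a genuinely different route from the paper's. The paper argues through the quasi-split characterisation of Remark \ref{rk qs}: regular elements have abelian centralisers, so for a regular semisimple element of $\m^\C$ the centraliser in $\g^\C$ is a Cartan subalgebra, hence of dimension equal to the rank, giving $\g$-regularity; the passage beyond semisimple elements is left implicit there. Your dimension count --- splitting $\lc_{\g^\C}(y)=\lc_\h(y)\oplus\lc_\m(y)$ by $\theta'$-stability and squeezing against Kostant's bound $\dim\lc_{\g^\C}(y)\ge 2p$ and the two bounds $\dim\lc_\h(y)\ge p$, $\dim\lc_\m(y)\ge p$ --- is uniform over all of $\m^\C$, so it handles non-semisimple elements with no extra step; the price is that it leans on the coincidence $\dim\h^\C=\dim\m^\C$, which is a feature of $\su(p+1,p)$ specifically rather than a reformulation of quasi-splitness (for the quasi-split form $\su(p,p)$ one has $\dim\h^\C=\dim\m^\C-1$ and the count must be adjusted accordingly). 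One streamlining remark: your bound (iii) also follows from the identity $\dim\lc_\h(y)-\dim\lc_\m(y)=\dim\h^\C-\dim\m^\C$, valid for every $y\in\m^\C$ because the image of $\ad(y)|_{\h^\C}$ is the Killing-orthogonal complement of $\lc_\m(y)$ in $\m^\C$; this also makes the two formulations of regularity in Definition \ref{defi regular} (maximal $H^\C$-orbit versus minimal $\lc_\m$) manifestly equivalent, a point you use implicitly in the converse direction. For the quasi-splitness assertion itself you and the paper do essentially the same thing, namely reduce to the abelianness of $\lc_\h({(\la')^\C})$ visible in (\ref{eq CH}).
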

\begin{proof}
 A form is quasi-split if and only if $\lc_\h({(\la')^\C})$ is abelian (cf. \cite{K}), which follows from \ref{eq CH}. In particular
 an element is regular if and only if its centraliser in $\g$ has the dimension of a Cartan subalgebra, as this is
 the case for semisimple elements. Thus it is regular
 in $\g$ if and only if it is regular in $\m$.
\end{proof}
 It is  useful to consider the following realization of $\su(p+1,p)$. Define the involution
 \begin{equation}\label{eq theta 2}
  \theta=\Ad(J_{p+1,p})
 \end{equation}
where $J_{p+1,p}$ is the matrix with $0$ entries everywehre except for the antidiagonal entries, which are $j_{i,2p-i+1}=1$ if 
$i\neq p+1$ and $j_{p+1,p+1}=-1$. Then the subalgebra $\ld^\C=\la^\C\oplus\lt^\C$,  with
\begin{equation}\label{eq la'}
 \la=\left\{ \left(  \begin{array}{ccc}
                      A&0&0\\
                      0&0&0\\
                      0&0&-A
                     \end{array}
\right)\ :\ A\in\mbox{Mat}_{p\times p}(\C)\mbox{ diagonal}\right\},
\end{equation}
\begin{equation}\label{eq lt'}
 \lt=\left\{ \left(  \begin{array}{ccc}
                      B&0&0\\
                      0&-2\mbox{tr}(B)&0\\
                      0&0&B
                     \end{array}
\right)\ :\ B\in\mbox{Mat}_{p\times p}(\C)\mbox{ diagonal}\right\}.
\end{equation}
is a $\theta$-invariant Cartan subalgebra which is also maximally 
anisotropic, as 
explained in \cite{GW}, Section 12.3.2.
\bibliographystyle{alpha}
\bibliography{SU.bib}
\end{document}